\DeclareMathOperator{\Br}{Br}
\DeclareMathOperator{\Cor}{Cor}
\DeclareMathOperator{\Gal}{Gal}
\DeclareMathOperator{\Hom}{Hom}
\DeclareMathOperator{\Ker}{Ker}
\DeclareMathOperator{\Res}{Res}
\DeclareFontFamily{U}{wncy}{}
\DeclareFontShape{U}{wncy}{m}{n}{<->wncyr10}{}
\DeclareSymbolFont{mcy}{U}{wncy}{m}{n}
\DeclareMathSymbol{\Sha}{\mathord}{mcy}{"58}
\DeclareMathSymbol{\sha}{\mathord}{mcy}{"78}
\begin{document}

\newtheorem{thm}{Theorem}[section]
\newtheorem{cor}[thm]{Corollary}
\newtheorem{lem}[thm]{Lemma}
\newtheorem{prop}[thm]{Proposition}
\newtheorem{defin}[thm]{Definition}
\newtheorem{exam}[thm]{Example}
\newtheorem{examples}[thm]{Examples}
\newtheorem{rem}[thm]{Remark}
\newtheorem{case}{\sl Case}
\newtheorem{claim}{Claim}
\newtheorem{prt}{Part}
\newtheorem*{mainthm}{Main Theorem}
\newtheorem*{thmA}{Theorem A}
\newtheorem*{thmB}{Theorem B}
\newtheorem*{thmC}{Theorem C}
\newtheorem*{thmD}{Theorem D}
\newtheorem{question}[thm]{Question}
\newtheorem*{notation}{Notation}
\swapnumbers
\newtheorem{rems}[thm]{Remarks}
\newtheorem*{acknowledgment}{Acknowledgment}

\newtheorem{questions}[thm]{Questions}
\numberwithin{equation}{section}

\newcommand{\Bock}{\mathrm{Bock}}
\newcommand{\Center}{\mathrm{Center}}
\newcommand{\dec}{\mathrm{dec}}
\newcommand{\diam}{\mathrm{diam}}
\newcommand{\dirlim}{\varinjlim}
\newcommand{\discup}{\ \ensuremath{\mathaccent\cdot\cup}}
\newcommand{\divis}{\mathrm{div}}
\newcommand{\gr}{\mathrm{gr}}
\newcommand{\nek}{,\ldots,}
\newcommand{\indec}{{\rm indec}}
\newcommand{\Ind}{\mathrm{Ind}}
\newcommand{\inv}{^{-1}}
\newcommand{\isom}{\cong}
\newcommand{\Massey}{\mathrm{Massey}}
\newcommand{\ndiv}{\hbox{$\,\not|\,$}}
\newcommand{\nil}{\mathrm{nil}}
\newcommand{\pr}{\mathrm{pr}}
\newcommand{\sep}{\mathrm{sep}}
\newcommand{\sh}{\mathrm{sh}}
\newcommand{\SR}{\mathrm{SR}}
\newcommand{\tagg}{^{''}}
\newcommand{\tensor}{\otimes}
\newcommand{\alp}{\alpha}
\newcommand{\gam}{\gamma}
\newcommand{\Gam}{\Gamma}
\newcommand{\del}{\delta}
\newcommand{\Del}{\Delta}
\newcommand{\eps}{\epsilon}
\newcommand{\lam}{\lambda}
\newcommand{\Lam}{\Lambda}
\newcommand{\sig}{\sigma}
\newcommand{\Sig}{\Sigma}
\newcommand{\bfA}{\mathbf{A}}
\newcommand{\bfB}{\mathbf{B}}
\newcommand{\bfC}{\mathbf{C}}
\newcommand{\bfF}{\mathbf{F}}
\newcommand{\bfP}{\mathbf{P}}
\newcommand{\bfQ}{\mathbf{Q}}
\newcommand{\bfR}{\mathbf{R}}
\newcommand{\bfS}{\mathbf{S}}
\newcommand{\bfT}{\mathbf{T}}
\newcommand{\bfZ}{\mathbf{Z}}
\newcommand{\dbA}{\mathbb{A}}
\newcommand{\dbC}{\mathbb{C}}
\newcommand{\dbF}{\mathbb{F}}
\newcommand{\dbN}{\mathbb{N}}
\newcommand{\dbQ}{\mathbb{Q}}
\newcommand{\dbR}{\mathbb{R}}
\newcommand{\dbU}{\mathbb{U}}
\newcommand{\dbV}{\mathbb{V}}
\newcommand{\dbZ}{\mathbb{Z}}
\newcommand{\grf}{\mathfrak{f}}
\newcommand{\gra}{\mathfrak{a}}
\newcommand{\grA}{\mathfrak{A}}
\newcommand{\grB}{\mathfrak{B}}
\newcommand{\grh}{\mathfrak{h}}
\newcommand{\grI}{\mathfrak{I}}
\newcommand{\grL}{\mathfrak{L}}
\newcommand{\grm}{\mathfrak{m}}
\newcommand{\grp}{\mathfrak{p}}
\newcommand{\grq}{\mathfrak{q}}
\newcommand{\grR}{\mathfrak{R}}
\newcommand{\calA}{\mathcal{A}}
\newcommand{\calB}{\mathcal{B}}
\newcommand{\calC}{\mathcal{C}}
\newcommand{\calE}{\mathcal{E}}
\newcommand{\calG}{\mathcal{G}}
\newcommand{\calH}{\mathcal{H}}
\newcommand{\calK}{\mathcal{K}}
\newcommand{\calL}{\mathcal{L}}
\newcommand{\calM}{\mathcal{M}}
\newcommand{\calW}{\mathcal{W}}
\newcommand{\calV}{\mathcal{V}}

\title[3-fold Massey products]{3-fold Massey products in Galois cohomology - The non-prime case}
\author{Ido Efrat}
\address{Earl Katz Family Chair in Pure Mathematics,
Department of Mathematics, Ben-Gurion University of the Negev, P.O.\  Box 653, Be'er-Sheva 8410501, Israel}
\email{efrat@bgu.ac.il}

\keywords{Triple Massey products,  absolute Galois groups, Galois cohomology, unitriangular representations, Kummer formations}

\subjclass[2010]{Primary 12G05, 12E30, 16K50}

\maketitle

\begin{abstract}
For $m\geq2$, let $F$ be a field of characteristic prime to $m$ and containing the roots of unity of order $m$, and let $G_F$ be its absolute Galois group.
We show that the 3-fold Massey products $\langle\chi_1,\chi_2,\chi_3\rangle$, with $\chi_1,\chi_2,\chi_3\in H^1(G_F,\dbZ/m)$ and $\chi_1,\chi_3$ $\dbZ/m$-linearly independent, are non-essential.
This was earlier proved for $m$ prime.
Our proof is based on the study of unitriangular representations of $G_F$.
\end{abstract}

A major open problem in modern Galois theory is to characterize the profinite groups which are realizable as the absolute Galois group $G_F=\Gal(F_\sep/F)$ of a field $F$.
Here it is natural to study the cohomological structure of absolute Galois groups, and in particular, the cohomology ring $H^*(G_F,\dbZ/m)$, under the standard assumption that $F$ has characteristic not dividing $m$ and contains the roots of unity of order $m$.
The celebrated Voevodsky--Rost theorem fully describes $H^*(G_F,\dbZ/m)$ as a graded algebra with respect to the cup product.
Namely, it is the quotient of the tensor $\dbZ/m$-algebra over $H^1(G_F,\dbZ/m)$ modulo the ideal generated by all tensors $(a)_F\tensor(1-a)_F$, where $0,1\neq a\in F$ and $(\cdot)_F$ denotes Kummer elements.
This imposes strong restrictions on the group-theoretic structure of $G_F$ (\cite{CheboluEfratMinac12}, \cite{EfratMinac17}).

In recent years there has been extensive research regarding \textsl{external} cohomological operations on $H^*(G_F,\dbZ/m)$, that is, natural maps on this graded algebra defined using the cochain algebra $C^*(G_F,\dbZ/m)$, and which go beyond the ring structure of $H^*(G_F,\dbZ/m)$.
A major example is the \textsl{$n$-fold Massey product} $\langle\cdot\nek\cdot\rangle\colon H^1(G_F,\dbZ/m)^n\to H^2(G_F,\dbZ/m)$, where $n\geq2$.
This operation can be defined in the cohomology algebra of any differential graded algebra $\calA_\bullet=(\bigoplus_{i\geq 0}\calA_i,d)$, and is a \textsl{multi-valued map}, i.e., for cohomology classes $[a_1]\nek [a_n]\in H^1(\calA_\bullet)$, the Massey product $\langle [a_1]\nek [a_n]\rangle$ is a \textsl{subset} of $H^2(\calA_\bullet)$.
For $n=2$ one has $\langle [a_1],[a_2]\rangle=\{[a_1][a_2]\}$, so in our case the 2-fold Massey product just recovers the cup product.

Thus the first nontrivial case is $n=3$.
We recall that the 3-fold Massey product $\langle [a_1],[a_2],[a_3]\rangle$, where $a_1,a_2,a_3\in \calA_1$ are 1-cocycles satisfying $[a_1][a_2]=0=[a_2][a_3]$, consists of the cohomology classes of the 2-cocycles $a_1 a_{12}+a_{23}a_3$, where $a_{12},a_{23}\in\calA_1$ are chosen so that $a_1a_2=da_{12}$ and $a_2 a_3=da_{23}$.
When $[a_1][a_2]$, $[a_2][a_3]$ do not both vanish, one sets $\langle [a_1],[a_2],[a_3]\rangle=\emptyset$  \cite{LodayVallette12}*{\S9.4.5}.
We refer to e.g., \cite{EfratMatzri17}*{\S1}, \cite{Fenn83}, \cite{Morishita04} for the definition for general $n\geq2$, and to \cite{Kraines66}, \cite{May69}, \cite{Deninger95} for the definition in higher degrees (note that the various sources use different sign conventions).
These latter cases will not be needed in the current paper.

We say that a Massey product $\langle [a_1]\nek[a_n]\rangle$ is \textsl{essential}, if it is nonempty, but does not contain zero.
A prototype of an essential 3-fold Massey product is given by the \textsl{Borromean rings} in algebraic topology \cite{Hillman12}*{\S10.1}.
By contrast,  Deligne, Griffiths, Morgan and Sullivan \cite{DeligneGriffithsMorganSullivan75} prove that compact K\"ahler manifolds are formal in the de Rahm context, which implies that all $n$-fold Massey products for $n\geq3$ are non-essential in this case (see \cite{Huybrechts05}*{Ch.\ 3.A}).

Back to the absolute Galois group case, it was shown in a series of recent works that, for $m=p$ prime, the 3-fold Massey products in $H^*(G_F,\dbZ/p)$ are not essential.
Specifically, when $m=2$ and $F$ is either a global field or a local field, this was shown in a pioneering work by Hopkins and Wickelgren \cite{HopkinsWickelgren15}.
Min\'a\v c and T\^an extended this to the case where $m=2$ and $F$ is an arbitrary field \cite{MinacTan17}, as well as to the case where $m=p$ is an arbitrary prime and $F$ is global or local \cite{MinacTan15b}.
Alternative proofs of the latter two results were given in \cite{EfratMatzri15}.
In fact, it was noted in \cite{EfratMatzri15} that the result for $m=2$ is a cohomological reformulation of a classical characterization of bi-quaternionic algebras, due to Albert \cite{Albert39}.
Other situations where $m=p$ is prime are studied in \cite{MinacTan15a}.
The prime case $m=p$ (for $n=3$) was finally settled by Matzri \cite{Matzri14}, using methods from the theory of central simple algebras and crossed products.
A simplified proof was given in \cite{EfratMatzri17} based solely on Galois cohomology tools.
Almost at the same time as \cite{EfratMatzri17}, Min\'a\v c and T\^an \cite{MinacTan17} gave another proof of the same result, and an additional proof was recently given in  \cite{LamLiuSharifiWangWake20}.
Furthermore, it was shown in  \cite{MinacTan17}*{Example 7.2} how this structural result can actually rule out specific profinite groups from being absolute Galois groups.

The current paper goes beyond the prime case, and extends the above results to arbitrary integers $m\geq2$.

\begin{mainthm}
Suppose that the field $F$ has characteristic not dividing $m$ and that it contains the roots of unity of order $m$.
Let $\chi_1,\chi_2,\chi_3\in H^1(G_F,\dbZ/m)$, where $\chi_1,\chi_3$ are $\dbZ/m$-linearly independent.
Then the 3-fold Massey product $\langle\chi_1,\chi_2,\chi_3\rangle$ is not essential.
\end{mainthm}
We note that in the prime case $m=p$, the independence assumption as well as the assumption about the roots of unity can be easily removed (see \cite{EfratMatzri17}, \cite{MinacTan16}).
On the other hand, Harpaz and Wittenberg show in \cite{HarpazWittenberg19} that the above property of 3-fold Massey products does not hold for $F=\dbQ$ and $m=8$, so the assumption about roots of unity cannot be removed in general.
Following  \cite{GuillotMinacTopazWittenberg18}, \cite{GuillotMinac17}, they prove the above fact for general  $n$-fold Massey products, $n\geq3$, in the case where $F$ is a number field and $m=p$ is prime.
We refer to  \cite{PalSchlank16} for related results.

The proof of the Main Theorem is based on ideas from \cite{EfratMatzri17} and \cite{Matzri14}.
However, we use the different viewpoint of \textsl{unitriangular representations} of $G_F$, that is, continuous homomorphisms into groups of unipotent upper-triangular matrices.
This viewpoint originates from an observation by Dwyer \cite{Dwyer75}, that in the special case of group cohomology, the homological definition of Massey products is equivalent to a construction in terms of unitriangular representations.
More specifically, the elements of a Massey products are pullbacks along such representations of certain cohomology elements constructed using groups of unitriangular matrices (see \S\ref{section on pullbacks}).
This alternative perspective does not only lead to the above generalization to the non-prime case, but also gives a simplified, and much more conceptual approach to the previous results.
E.g., we no longer need calculations with twisted norms, as in \cite{Matzri14}, \cite{EfratMatzri17}, \cite{MinacTan17}.

Furthermore, recent works indicate that Massey products should be seen as the extreme end of a \textsl{spectrum} of unitriangular cohomology elements, which ought to be studied as a whole (the other end consisting of Bockstein elements).
For instance, when $G$ is a free profinite group on a basis $X$, and $G^{(n)}$ is  the $n$th term in its lower $p$-central filtration, this spectrum gives an explicit combinatorial description of $H^2(G/G^{(n)},\dbZ/p)$ in terms of the shuffle algebra on $X$ (\cite{Efrat17}, \cite{Efrat20a}).
An analogous description for the $p$-Zassenhaus filtration is given in \cite{Efrat20b}.
With an eye towards a possible generalization of the Main Theorem to other elements of this spectrum, we develop here a basic ``unitriangular toolkit" which may be helpful to handle these new cohomology elements.

\medskip

We complete this introduction by sketching the main steps of the proof:
We write $\dbU_n(\dbZ/m)$ (resp., $\bar\dbU_n(\dbZ/m)$) for the group of all $(n+1)\times(n+1)$-unitriangular matrices over the ring $\dbZ/m$ (resp., with the $(1,n+1)$-entry omitted).
There is a central extension
\[
0\to\dbZ/m\to\dbU_3(\dbZ/m)\to\bar\dbU_3(\dbZ/m)\to1.
\]

Now assuming that the $3$-fold Massey product $\langle\chi_1,\chi_2,\chi_3\rangle$ is nonempty, there is
a continuous homomorphism $\bar\rho\colon G_F\to\bar\dbU_3(\dbZ/m)$ such that $\bar\rho_{i,i+1}=\chi_i$, $i=1,2,3$.
Let $M_1=\Ker(\chi_1)$.
Using Hilbert theorem 90, we find $\omega\in H^1(M_1,\dbZ/m)$ with the following two properties:
\begin{enumerate}
\item[(i)]
There is a continuous homomorphism
$\begin{bmatrix}
1&\chi_1&\bar\rho'_{13}\\
0&1&\chi_2\\
0&0&1
\end{bmatrix}
\colon G_F\to\dbU_2(\dbZ/m)$ such that $\omega=\bar\rho'_{13}|_{M_1}$; and
\item[(ii)]
There is a continuous homomorphism
$\begin{bmatrix}1&\omega&*\\
0&1&\chi_3|_{M_1}\\
0&0&1\end{bmatrix}\colon M_1\to\dbU_2(\dbZ/m)$.
\end{enumerate}
Using the corestriction--cup product-restriction exact sequence in Galois cohomology, we obtain $\lam_{34}\in H^1(G_F,\dbZ/m)$ such that the modified homomorphism
\[
\begin{bmatrix}
1&\chi_1&\bar\rho'_{13}& \\
0&1&\chi_2&\bar\rho_{24}+\lam_{34}\\
0&0&1&\chi_3\\
0&0&0&1
\end{bmatrix}
\colon G_F\to\bar\dbU_3(\dbZ/m)
\]
can be completed into a continuous homomorphism $G_F\to\dbU_3(\dbZ/m)$.
This implies that the pull-back of the above central extension along the modified homomorphism is $0\in H^2(G_F,\dbZ/m)$.
On the other hand, in view of the structure of the off-diagonal in this matrix,  the pull-back belongs to $\langle\chi_1,\chi_2,\chi_3\rangle$, as desired.

As in \cite{EfratMatzri17}, we prove the Main Theorem more generally in the formal setting of \textsl{$m$-Kummer formations}, which axiomatize the cohomological properties of absolute Galois groups needed in this argument.

\section{Unitriangular Matrices}
\label{section on unitriangular matrices}
Let $n$ be a positive integer and $R$ a commutative unital ring.
The set $\dbU_n(R)$ of all unitriangular (i.e., unipotent and upper-triangular) $(n+1)\times(n+1)$-matrices over $R$ forms a group in the standard way.
The additive group $R^+$ of $R$ embeds as a central subgroup of $\dbU_n(R)$ via the map $a\mapsto I+aE_{1,n+1}$, where $I$ is the unit matrix and $E_{ij}$ denotes the matrix with $1$ at entry $(i,j)$, and $0$ elsewhere.
Let
\[
\bar\dbU_n(R)=\dbU_n(R)/R^+.
\]
We may consider its elements as unitriangular $(n+1)\times(n+1)$-matrices, with the $(1,n+1)$-entry omitted.
Then the multiplication in $\bar\dbU$ is carried out similarly to the usual multiplication of upper-triangular matrices.
We abbreviate $\dbU=\dbU_n(R)$ and $\bar\dbU=\bar\dbU_n(R)$, and let $\pi\colon \dbU\to\bar\dbU$ be the projection map.
Thus there is a central extension
\begin{equation}
\label{central extension}
0\to R^+\to\dbU\xrightarrow{\pi}\bar\dbU\to1.
\end{equation}

For $1\leq i<j\leq n+1$, let $\pr_{ij}\colon\dbU\to R$ be the projection on the $(i,j)$-entry.
When $(i,j)\neq(1,n+1)$ it induces a projection map $\pr_{ij}\colon\bar\dbU\to R$.
We note that $\pr_{i,i+1}\colon\dbU\to R^+$, $i=1,2\nek n$, are group homomorphisms.

Given $1\leq a\leq b\leq n+1$ with $r=b-a$, the restriction to rows $a\leq i\leq b$ and columns $a\leq j\leq b$ gives a homomorphism $\dbU\to\dbU_r(R)$.

From now on we further assume that the ring $R$ is finite.
Let $\bar\dbU$ act on $R^+$ trivially, and consider the cohomology groups $H^l(\bar\dbU,R^+)$, $l=1,2$.
The projections $\pr_{i,i+1}$, $i=1,2\nek n$, may be viewed as elements of $H^1(\bar\dbU,R^+)=\Hom(\bar\dbU,R^+)$.
The extension (\ref{central extension}) corresponds under the Schreier correspondence to an element $\alp=\alp_{n,R}$ of $H^2(\bar\dbU,R^+)$ with the trivial action \cite{NeukirchSchmidtWingberg}*{Th.\ 1.2.4}.

Consider the $2$-cochain
\[
c=\sum_{k=2}^n\pr_{1k}\cup\pr_{k,n+1}\colon \bar\dbU\times\bar\dbU\to R^+, \quad
((m_{ij}),(m'_{ij}))\mapsto\sum_{k=2}^nm_{1k}m'_{k,n+1}.
\]
It is straightforward to verify that it is a 2-cocycle.

\begin{prop}
\label{decomposition of alpha}
The cohomology class of $c$ is $\alp$.
\end{prop}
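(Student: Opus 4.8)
The plan is to make the Schreier correspondence of \cite{NeukirchSchmidtWingberg}*{Th.\ 1.2.4} completely explicit for a convenient section. Recall that $\alp$ is represented by the factor set $(x,y)\mapsto s(x)s(y)s(xy)\inv$ attached to any set-theoretic (continuous) section $s\colon\bar\dbU\to\dbU$ of $\pi$, where the value is read as an element of $R^+$ via the central embedding $a\mapsto I+aE_{1,n+1}$. Since everything in sight is finite, continuity is automatic, so it suffices to produce one section whose factor set is \emph{literally} $c$. For this I would take the ``obvious'' lift: given $\bar m=(m_{ij})\in\bar\dbU$, let $s(\bar m)\in\dbU$ be the unitriangular matrix with the same retained entries and with $0$ in position $(1,n+1)$.

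The key point is an elementary observation about unitriangular multiplication. For unitriangular $A,B$ and $i<j$ we have $(AB)_{ij}=\sum_{i\leq k\leq j}A_{ik}B_{kj}$, and the $(1,n+1)$-entry of $A$ can enter this sum only through the term with $(i,k)=(1,n+1)$, which forces $(i,j)=(1,n+1)$; likewise for $B$. Hence for $(i,j)\neq(1,n+1)$ the $(i,j)$-entry of $s(\bar m)s(\bar m')$ depends only on the retained entries of $\bar m,\bar m'$ and coincides with the corresponding entry of the product $\bar m\bar m'$ in $\bar\dbU$; thus $\pi(s(\bar m)s(\bar m'))=\bar m\bar m'$. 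On the other hand the $(1,n+1)$-entry of $s(\bar m)s(\bar m')$ equals $\sum_{1\leq k\leq n+1}[s(\bar m)]_{1k}[s(\bar m')]_{k,n+1}$, and the terms $k=1$ and $k=n+1$ vanish because $[s(\bar m)]_{1,n+1}=[s(\bar m')]_{1,n+1}=0$, so it reduces to $\sum_{k=2}^n m_{1k}m'_{k,n+1}=c(\bar m,\bar m')$. Since $s(\bar m\bar m')$ has $0$ in position $(1,n+1)$, and right multiplication by $I+aE_{1,n+1}$ merely adds $a$ there, we conclude $s(\bar m)s(\bar m')=s(\bar m\bar m')\,(I+c(\bar m,\bar m')E_{1,n+1})$. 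Therefore the factor set of $s$ is exactly $c$, which gives $[c]=\alp$.

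The computation is routine; the only thing that requires care is bookkeeping, namely matching the ordering and sign convention of the factor set with the one underlying \cite{NeukirchSchmidtWingberg}*{Th.\ 1.2.4}, and checking that the central embedding $a\mapsto I+aE_{1,n+1}$ used there is indeed the one appearing in the extension (\ref{central extension}). I expect no genuine obstacle: the multi-term shape $\sum_{k=2}^n\pr_{1k}\cup\pr_{k,n+1}$ of $c$ is simply the reflection of the fact that the $(1,n+1)$-entry of a product of unitriangular matrices accumulates a contribution from every intermediate index $k$.
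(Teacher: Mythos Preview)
Your proof is correct and is essentially the paper's argument run through the dual formulation of the Schreier correspondence: the paper builds the twisted group $R^+\times\bar\dbU$ from $c$ and verifies that $h=\pr_{1,n+1}\times\pi\colon\dbU\to R^+\times\bar\dbU$ is an isomorphism of extensions, whereas you choose the section $s$ (which is exactly the inverse of $h$ restricted to $\{0\}\times\bar\dbU$) and compute its factor set. Both arguments reduce to the same identity $\sum_{k=1}^{n+1}m_{1k}m'_{k,n+1}=m_{1,n+1}+m'_{1,n+1}+c(\bar m,\bar m')$.
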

\begin{proof}
Since $c$ is a $2$-cocycle, $R^+\times \bar\dbU$ is a group with respect to the multiplication
\[
(r,(m_{ij}))*(r',(m'_{ij}))=(r+r'+c((m_{ij}),(m'_{ij})),(m_{ij})(m'_{ij})).
\]
We obtain a central extension
\begin{equation}
\label{central extension 2}
0\to R^+\to R^+\times \bar\dbU\to \bar\dbU\to1,
\end{equation}
which corresponds to the cohomology class of $c$ in $H^2(\dbU,R^+)$ (see the proof of \cite{NeukirchSchmidtWingberg}*{Th.\ 1.2.4}).

It is therefore enough to show that the central extensions (\ref{central extension}) and (\ref{central extension 2}) are equivalent.
To this end consider the map $h=\pr_{1,n+1}\times\pi\colon\dbU\to R^+\times\bar\dbU$.
It is bijective, and commutes with the epimorphisms onto $\bar \dbU$ in both extensions.
It remains to show that $h$ is a group homomorphism.
Indeed, for $(m_{ij}),(m'_{ij})\in\dbU$ we have
\[
\sum_{k=1}^{n+1}m_{1k}m'_{k,n+1}
=m_{1,n+1}+m'_{1,n+1}+c(\pi((m_{ij})),\pi((m'_{ij}))),
\]
which implies that
\[
\begin{split}
&h((m_{ij})(m'_{ij}))=\Bigl(\sum_{k=1}^{n+1}m_{1k}m'_{k,n+1},\pi((m_{ij})(m'_{ij}))\Bigr)\\
&=(m_{1,n+1},\pi((m_{ij})))*(m'_{1,n+1},\pi((m'_{ij})))
=h((m_{ij}))*h((m'_{ij})),
\end{split}
\]
as desired.
\end{proof}

\section{pullbacks}
\label{section on pullbacks}
Let $G$ be a profinite group acting trivially on $R^+$.
Given a continuous homomorphism $\bar\rho\colon G\to\bar\dbU$, we abbreviate $\bar\rho_{ij}=\pr_{ij}\circ\bar\rho$.
Then $\bar\rho_{i,i+1}\colon G\to R^+$, $i=1,2\nek n$, are homomorphisms.

We say that $\bar\rho$ is \textsl{central} if its image is contained in $\Center(\bar\dbU)$.
If $\bar\rho,\bar\rho'\colon G\to \bar\dbU$ are continuous homomorphisms with $\bar\rho'$ central, then  the product map $\bar\rho\cdot\bar\rho'\colon G\to\bar\dbU$, $g\mapsto\bar\rho(g)\bar\rho'(g)$, is also a continuous homomorphism.

Given a continuous homomorphism $\bar\rho\colon G\to\bar\dbU$, let
\[
\bar\rho^*\colon H^2(\bar\dbU,R^+)\to H^2(G,R^+)
\]
be the functorially  induced homomorphism.
Thus $\bar\rho^*\alp$ is the pullback of $\alp=\alp_{n,R}$ to $H^2(G,R^+)$.
It corresponds to the central extension
\[
\label{embedding problem}
0\to R^+\to \dbU\times_{\bar\dbU}G\to G\to1,
\]
where $\dbU\times_{\bar\dbU}G$ is the fiber product of $\dbU$ and $R$ with respect to the projection  $\pi\colon \dbU\to\bar \dbU$ and to $\bar\rho$.
We have $\bar\rho^*\alp=0$ if and only if the latter extension splits, or alternatively, there is a continuous homomorphism $\rho\colon G\to\dbU$ making the following diagram commutative \cite{Hoechsmann68}*{1.1}:
\begin{equation}
\label{embedding problem}
\xymatrix{&&&G\ar[d]^{\bar\rho}\ar@{-->}[dl]_{\rho}&\\
0\ar[r]& R^+\ar[r]& \dbU\ar[r] &\bar\dbU\ar[r]&1.
}
\end{equation}

If $\bar\dbV$ is a subgroup of $\bar\dbU$ and $M=\bar\rho\inv(\bar\dbV)$, then there is a commutative square
\begin{equation}
\label{res commutes with pullbacks}
\xymatrix{
H^2(\bar\dbU,R^+)\ar[r]^{\Res_{\bar\dbV}}\ar[d]_{\bar\rho^*} & H^2(\bar\dbV,R^+)\ar[d]^{(\Res_M\bar\rho)^*} \\
H^2(G,R^+)\ar[r]^{\Res_M}& H^2(M,R^+).
}
\end{equation}

Dwyer \cite{Dwyer75} shows in the discrete context the following connection with Massey products  (see e.g., \cite{Efrat14}*{\S8}  for the profinite context):
Given $\chi_1\nek\chi_n\in H^1(G,R^+)$, where $n\geq2$, the Massey product $\langle\chi_1\nek\chi_n\rangle\subseteq H^2(G,R^+)$ consists of all pullbacks $\bar\rho^*\alp$  with $\bar\rho\colon G\to\bar\dbU$ a continuous homomorphism such that $\chi_i=\bar\rho_{i,i+1}$, $i=1,2\nek n$.
In particular:
\begin{enumerate}
\item[(i)]
$\langle\chi_1\nek\chi_n\rangle\neq\emptyset$ if and only if there is a continuous homomorphism $\bar\rho\colon G\to\bar\dbU$ with $\chi_i=\bar\rho_{i,i+1}$, $i=1,2\nek n$;
\item[(ii)]
$0\in \langle\chi_1\nek\chi_n\rangle$ if and only if there is a continuous homomorphism $\bar\rho\colon G\to\bar\dbU$ with $\chi_i=\bar\rho_{i,i+1}$, $i=1,2\nek n$, and $\bar\rho^*\alp=0$;
i.e., there is a continuous homomorphism $\rho\colon G\to\dbU$ with $\chi_i=\rho_{i,i+1}$, $i=1,2\nek n$.
\end{enumerate}

\begin{rem}
\label{remark on cup products}
\rm
If all of $\bar\rho_{1k}$, $\bar\rho_{k,n+1}$, $k=2\nek n$, are homomorphisms, then they can be identified as elements of $H^1(\bar\dbU,R^+)$.
Hence Proposition \ref{decomposition of alpha} gives rise to a  decomposition $\bar\rho^*\alp=\sum_{k=2}^n\bar\rho_{1k}\cup\bar\rho_{k,n+1}$ in $H^2(G,R^+)$.

For example, for $n=2$, the maps $\bar\rho_{12},\bar\rho_{23}\colon G\to R^+$ are homomorphisms, so $\bar\rho^*\alp=\bar\rho_{12}\cup\bar\rho_{23}$.
More generally, one has:
\end{rem}

\begin{lem}
\label{vanishing of cup products}
Suppose that $n\geq3$ and let $\bar\rho\colon G\to\bar\dbU$ be a continuous homomorphism.
Then $\bar\rho_{i-1,i}\cup\bar\rho_{i,i+1}=0$ in $H^2(G,R^+)$, $i=2\nek n$.
\end{lem}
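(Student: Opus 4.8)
The statement says that consecutive cup products $\bar\rho_{i-1,i}\cup\bar\rho_{i,i+1}$ vanish for a homomorphism into $\bar\dbU_n(R)$ with $n\ge 3$. The natural approach is to exhibit each such cup product as the pullback of $\alpha_{2,R}$ along a $\bar\dbU_2$-valued homomorphism that actually lifts to $\dbU_2$, so that the pullback is $0$. Concretely, fix $i\in\{2,\dots,n\}$. Since $n\ge 3$, either $i+1\le n+1$ with $i-1\ge 1$ and the triple of indices $\{i-1,i,i+1\}$ lies inside $\{1,\dots,n+1\}$; restricting $\bar\rho$ to the rows and columns indexed by $i-1,i,i+1$ (using the restriction-to-a-block homomorphism $\dbU\to\dbU_r(R)$ recorded in Section~\ref{section on unitriangular matrices}, applied after lifting, or directly on $\bar\dbU$ when the block avoids the $(1,n+1)$ corner) produces a continuous homomorphism $\sigma\colon G\to\dbU_2(R)$ whose $(1,2)$- and $(2,3)$-entries are $\bar\rho_{i-1,i}$ and $\bar\rho_{i,i+1}$.

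First I would make precise that the $3\times 3$ block of $\bar\rho$ on indices $i-1,i,i+1$ is a genuine $\dbU_2(R)$-valued homomorphism: the block multiplication in $\bar\dbU$ agrees with ordinary upper-triangular matrix multiplication (as noted after the definition of $\bar\dbU$), and since $\{i-1,i,i+1\}\ne\{1,n+1\}$ as an unordered pair when $n\ge 3$, none of the three relevant entries is the omitted corner entry; hence the block map lands in $\dbU_2(R)$ rather than merely $\bar\dbU_2(R)$. Call this homomorphism $\sigma$. Composing with $\pi\colon\dbU_2(R)\to\bar\dbU_2(R)$ gives $\bar\sigma=\pi\circ\sigma\colon G\to\bar\dbU_2(R)$ with $\bar\sigma_{12}=\bar\rho_{i-1,i}$ and $\bar\sigma_{23}=\bar\rho_{i,i+1}$.

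Next, by Remark~\ref{remark on cup products} (the $n=2$ case), $\bar\sigma^*\alpha_{2,R}=\bar\sigma_{12}\cup\bar\sigma_{23}=\bar\rho_{i-1,i}\cup\bar\rho_{i,i+1}$ in $H^2(G,R^+)$. But the existence of $\sigma$ itself is precisely a lift of $\bar\sigma$ through the central extension $0\to R^+\to\dbU_2(R)\to\bar\dbU_2(R)\to 1$, i.e.\ a continuous homomorphism making diagram~\eqref{embedding problem} (for $n=2$) commute; by the criterion stated right after that diagram, this forces $\bar\sigma^*\alpha_{2,R}=0$. Combining the two displays gives $\bar\rho_{i-1,i}\cup\bar\rho_{i,i+1}=0$, and since $i\in\{2,\dots,n\}$ was arbitrary, the lemma follows.

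The only real point requiring care — the "main obstacle," though a mild one — is the bookkeeping in the previous paragraph: checking that the block restriction genuinely produces a $\dbU_2(R)$-valued (not just $\bar\dbU_2(R)$-valued) homomorphism, which is exactly where the hypothesis $n\ge 3$ is used (for $n=2$ the block on $\{1,2,3\}$ is all of $\bar\dbU_2$ and the corner entry has been omitted, so no lift is forced and indeed $\bar\rho_{12}\cup\bar\rho_{23}$ need not vanish). Everything else is a formal consequence of Proposition~\ref{decomposition of alpha}, Remark~\ref{remark on cup products}, and the lifting criterion for central extensions.
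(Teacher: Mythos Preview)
Your proof is correct and follows essentially the same argument as the paper: restrict $\bar\rho$ to the $3\times3$ block on rows and columns $i-1,i,i+1$ to obtain a homomorphism $\nu\colon G\to\dbU_2(R)$ (well-defined on $\bar\dbU$ precisely because $n\ge3$ forces the block to avoid the $(1,n+1)$-entry), observe that $\nu$ solves the embedding problem for $\bar\nu=\pi\circ\nu$, and conclude $\bar\rho_{i-1,i}\cup\bar\rho_{i,i+1}=\bar\nu^*\alpha_{2,R}=0$ via Remark~\ref{remark on cup products}. Your discussion of why $n\ge3$ is needed is in fact more explicit than the paper's; the only quibble is the phrasing ``$\{i-1,i,i+1\}\ne\{1,n+1\}$ as an unordered pair,'' where what you mean is that the pair $(i-1,i+1)$ is not equal to $(1,n+1)$.
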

\begin{proof}
Let $2\leq i \leq n$.
Recall that the projection to rows and columns $i-1,i,i+1$ is a homomorphism $\dbU\to\dbU_2(R)$.
By composing it with $\bar\rho$ we obtain that the maps
\[
\nu=\begin{bmatrix}1&\bar\rho_{i-i,i}&\bar\rho_{i-1,i+1}\\
0&1&\bar\rho_{i,i+1}\\
0&0&1\end{bmatrix}
\colon G\to\dbU_2(R), \
\bar\nu=\begin{bmatrix}1&\bar\rho_{i-i,i}&\\
0&1&\bar\rho_{i,i+1}\\
0&0&1\end{bmatrix}
\colon G\to\bar\dbU_2(R)
\]
are homomorphisms.
Clearly, $\nu$ solves the embedding problem (\ref{embedding problem}) for $n=2$ and $\bar\nu$, so $\bar\nu^*\alp_{2,R}=0$.
By Remark \ref{remark on cup products}, $\bar\rho_{i-1,i}\cup\bar\rho_{i,i+1}=0$.
\end{proof}

As another example, we have the following method to modify the pullbacks $\bar\rho^*\alp$:

\begin{lem}
\label{modified homomorphism}
Suppose that $n\geq3$.
Let $\bar\rho\colon G\to\bar\dbU$ be a continuous homomorphism,  let $\lam,\lam'\colon G\to R^+$ be continuous maps, and let
\[
\bar\rho'=\begin{bmatrix}1&\bar\rho_{12}&\cdots&\bar\rho_{1,n-1}&\bar\rho_{1n}+\lam&\\
&1&\bar\rho_{23}&\cdots&\bar\rho_{2n}&\bar\rho_{2,n+1}+\lam'\\
&&1&\cdots&&\bar\rho_{3n}\\
&&&\ddots&&\vdots\\
&&&&1&\bar\rho_{n-1,n}\\
&&&&&1  \end{bmatrix}
\colon G\to\bar\dbU.
\]
Then $\bar\rho'$ is a homomorphism if and only if $\lam,\lam'$ are homomorphisms.
Moreover, in this case
\[
(\bar\rho')^*\alp=\bar\rho^*\alp+\lam\cup\bar\rho_{n,n+1}+\bar\rho_{12}\cup\lam'.
\]
\end{lem}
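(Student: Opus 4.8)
The plan is to reduce everything to two separate analyses: first, when is the matrix-valued function $\bar\rho'$ actually a group homomorphism; second, once it is, how does its pullback of $\alp$ differ from $\bar\rho^*\alp$. For the first part, I would compute $\bar\rho'(g)\bar\rho'(h)$ directly using the multiplication in $\bar\dbU$ and compare it with $\bar\rho'(gh)$. Since $\bar\rho$ itself is a homomorphism, all the entry identities coming from the original matrix are automatically satisfied; the only new constraints come from the two perturbed entries $\bar\rho_{1n}+\lam$ and $\bar\rho_{2,n+1}+\lam'$. In the product of two $\bar\dbU$-matrices the $(1,n)$-entry is $\sum_{k}m_{1k}m'_{kn}$, and since $\lam$ appears only in the $(1,n)$-slot (with $m_{nn}=m'_{11}=1$), the homomorphism condition on that entry reduces, after cancelling the terms already guaranteed by $\bar\rho$ being a homomorphism, exactly to $\lam(gh)=\lam(g)+\lam(h)$; similarly the $(2,n+1)$-entry forces $\lam'$ to be a homomorphism. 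Conversely, if both $\lam,\lam'$ are homomorphisms these identities hold and $\bar\rho'$ is a homomorphism. This is the bookkeeping core of the lemma and the place where one has to be careful that the perturbation in a corner entry interacts with nothing else, which is exactly why $(1,n)$ and $(2,n+1)$ were chosen (they are the penultimate entries of their row, adjacent to a diagonal $1$) — so I expect the main obstacle to be purely notational rather than conceptual.

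For the second part, assume $\lam,\lam'$ are homomorphisms, so $\bar\rho'$ is a homomorphism and we may apply Remark \ref{remark on cup products} provided the relevant entries are homomorphisms. Here is where I would invoke Proposition \ref{decomposition of alpha} / Remark \ref{remark on cup products}: for $\bar\rho$ we generally do \emph{not} know that all $\bar\rho_{1k}$ and $\bar\rho_{k,n+1}$ are homomorphisms, so I cannot write $\bar\rho^*\alp$ itself as a sum of cup products. Instead I would work with the difference. The clean approach is to factor $\bar\rho' = \bar\rho\cdot\sig$ where $\sig\colon G\to\bar\dbU$ is the \emph{central} homomorphism $g\mapsto I+\lam(g)E_{1n}+\lam'(g)E_{2,n+1}$ (central because its only nonzero off-diagonal entries lie in the last column in rows $1,2$, and one checks $E_{1n}$, $E_{2,n+1}$ are central in $\bar\dbU$ — indeed $I+aE_{1n}+bE_{2,n+1}$ lies in $\Center(\bar\dbU)$ since both $E_{1n}$ and $E_{2,n+1}$ are killed on left and right by the augmentation ideal). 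Using the cocycle $c=\sum_{k=2}^n\pr_{1k}\cup\pr_{k,n+1}$ representing $\alp$, I would compute $c(\bar\rho'(g),\bar\rho'(h)) - c(\bar\rho(g),\bar\rho(h))$: the terms $m_{1k}m'_{k,n+1}$ for $3\le k\le n-1$ are unchanged (those entries are untouched), the $k=2$ term changes from $\bar\rho_{12}(g)\bar\rho_{2,n+1}(h)$ to $\bar\rho_{12}(g)(\bar\rho_{2,n+1}(h)+\lam'(h))$, and the $k=n$ term changes from $\bar\rho_{1n}(g)\bar\rho_{n,n+1}(h)$ to $(\bar\rho_{1n}(g)+\lam(g))\bar\rho_{n,n+1}(h)$. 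Hence the difference of the two $2$-cocycles $(\bar\rho')^*c - \bar\rho^*c$ equals the $2$-cocycle $(g,h)\mapsto \lam(g)\bar\rho_{n,n+1}(h) + \bar\rho_{12}(g)\lam'(h)$, which is precisely the cocycle representing $\lam\cup\bar\rho_{n,n+1}+\bar\rho_{12}\cup\lam'$ (note $\lam$, $\bar\rho_{n,n+1}=\bar\rho_{n,n+1}$, $\bar\rho_{12}$, $\lam'$ are all homomorphisms, so these cup products make sense as in Remark \ref{remark on cup products}). Passing to cohomology classes gives $(\bar\rho')^*\alp=\bar\rho^*\alp+\lam\cup\bar\rho_{n,n+1}+\bar\rho_{12}\cup\lam'$, as claimed.

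The one subtlety I would flag is the hypothesis $n\ge3$: it guarantees that the perturbed entries $(1,n)$ and $(2,n+1)$ are \emph{distinct} slots and distinct from the omitted corner $(1,n+1)$, so that $\lam$ and $\lam'$ genuinely act independently and the factorization $\bar\rho'=\bar\rho\cdot\sig$ is well-defined with $\sig$ valued in $\bar\dbU$ (for $n=2$ the two perturbations would collide and the corner is omitted anyway). I would also remark that the computation of the cocycle difference only uses that $\bar\rho$ is a homomorphism into $\bar\dbU$ — no homomorphism property of the intermediate entries $\bar\rho_{1k}$ is needed, because we are differencing against $\bar\rho^*c$ rather than expanding it. Thus the whole proof is: (1) the direct multiplication check identifying the homomorphism condition with $\lam,\lam'$ being homomorphisms; (2) exhibiting the central homomorphism $\sig$ and the factorization; (3) the explicit cocycle difference computation above; (4) reading off the cup-product formula. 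None of these steps is deep; the only place demanding care is step (1)'s matrix bookkeeping.
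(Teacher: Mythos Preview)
Your proposal is correct and follows essentially the same route as the paper: the forward implication via the factorization $\bar\rho'=\bar\rho\cdot(I+\lam E_{1n})\cdot(I+\lam' E_{2,n+1})$ with the factors central in $\bar\dbU$, the converse via the direct entry computation, and the cohomology formula via the explicit $2$-cocycle $c$ from Proposition~\ref{decomposition of alpha}. Your cocycle-difference argument is exactly what the paper's terse ``follows from Proposition~\ref{decomposition of alpha}'' unpacks to; the only (harmless) slip is your parenthetical description of the location of the central entries --- $E_{1n}$ sits in column $n$, not the last column --- but your actual centrality claim and computation are correct.
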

\begin{proof}
If $\lam,\lam'$ are homomorphisms, then the maps
\[
I+\lam E_{1n}, I+\lam' E_{2,n+1}\colon G\to\bar\dbU
\]
are central homomorphisms.
By the previous comments,
\[
\bar\rho'=\bar\rho\cdot(I+\lam E_{1n})\cdot(I+\lam' E_{2,n+1})\colon G\to\bar\dbU
\]
is a continuous homomorphism.

Conversely, if  $\bar\rho$, $\bar\rho'$ are homomorphisms, then for $g,h\in G$ we have
\[
\begin{split}
\lam(gh)&=(\bar\rho'(g)\bar\rho'(h))_{1n}-(\bar\rho(g)\bar\rho(h))_{1n}\\
&=\bigl(\bar\rho'_{1n}(g)+\sum_{k=2}^{n-1}\bar\rho_{1k}(g)\bar\rho_{kn}(h)+\bar\rho'_{1n}(h)\bigr)\\
&\quad -\bigl(\bar\rho_{1n}(g)+\sum_{k=2}^{n-1}\bar\rho_{1k}(g)\bar\rho_{kn}(h)+\bar\rho_{1n}(h)\bigr)\\
&=\lam(g)+\lam(h),
\end{split}
\]
and similarly for $\lam'$.

The last assertion follows from Proposition \ref{decomposition of alpha}.
\end{proof}

For $n\geq3$ the projections $\bar\rho_{1k}$, $3\leq k\leq n$, and $\bar\rho_{k,n+1}$, $2\leq k\leq n-1$, need not be homomorphisms, so Proposition \ref{decomposition of alpha} does not yield a cohomological decomposition of $\bar\rho^*\alp$ as a sum of cup products.
Our main result circumvents this difficulty when $n=3$ and $G=G_F$ is the absolute Galois group of a field containing a root of unity of order $m$.
A key argument in the proof will be based on the following analysis:

\begin{exam}
\label{restriction to V}
\rm
Suppose that $n=3$, and let $\dbV$, $\bar\dbV$ be the kernels of $\pr_{12}$, considered as subgroups of $\dbU=\dbU_3(R)$, $\bar\dbU=\bar\dbU_3(R)$, respectively.
The restrictions $\Res_{\bar\dbV}(\pr_{13}),\Res_{\bar\dbV}(\pr_{34})\colon\bar\dbV\to R^+$ are homomorphisms, and may be viewed as elements of $H^1(\bar\dbV,R^+)$.
Since $\Res_{\bar\dbV}(\pr_{12})=0$, the $2$-cocycle $c$ and $\pr_{13}\cup\pr_{34}$ have the same restriction to $\bar\dbV$.

Let $\iota\colon \bar\dbV\to\bar\dbU$, $\bar\iota\colon \bar\dbV\to\bar\dbU$ be the inclusion maps, so $\iota^*=\Res_\dbV$ and $\bar\iota^*=\Res_{\bar\dbV}$.
There is a commutative diagram of central extensions
\[
\xymatrix{
0\ar[r] &R^+\ar@{=}[d]\ar[r]&\dbV\ar[r]\ar@{_{(}->}[d]^{\iota}&\bar\dbV\ar[r]\ar@{_{(}->}[d]^{\bar\iota}&1\\
0\ar[r]&R^+\ar[r]&\dbU\ar[r]^{\pi}&\bar\dbU\ar[r]&1.
}
\]
We have $\dbV=\dbU\times_{\bar\dbU}\bar\dbV$.
By the previous comments, the upper extension in the diagram corresponds to $\Res_{\bar\dbV}(\alp)\in H^2(\bar\dbV,R^+)$, which by
Proposition \ref{decomposition of alpha},  is $\Res_{\bar\dbV}(\pr_{13})\cup\Res_{\bar\dbV}(\pr_{34})$.

Now let $G$ be a profinite group, let $\bar\rho\colon G\to\bar\dbU$ be a continuous homomorphism, and set $M_1=\bar\rho\inv(\bar\dbV)$.
We note that $\Res_{M_1}(\bar\rho_{13})$ and $\Res_{M_1}(\bar\rho_{34})$ are homomorphisms, considered as elements of $H^1(M_1,R^+)$.
By (\ref{res commutes with pullbacks}),
\begin{equation}
\label{Res M1 of pullback}
\begin{split}
\Res_{M_1}(\bar\rho^*\alp)
&=(\Res_{M_1}\bar\rho)^*(\Res_{\bar\dbV}(\alp))\\
&=(\Res_{M_1}\bar\rho)^*(\Res_{\bar\dbV}(\pr_{13}))\cup(\Res_{M_1}\bar\rho)^*(\Res_{\bar\dbV}(\pr_{34})).\\
&=\Res_{M_1}(\bar\rho_{13})\cup\Res_{M_1}(\bar\rho_{34}).
\end{split}
\end{equation}
\end{exam}

\section{Restriction of homomorphisms}
\label{section on restrictions of homomorphisms}
In this section we focus on the case $n=2$, so $\dbU=\dbU_2(R)$.
Let $G$ be a profinite group.

\begin{prop}
\label{psi a homomorphism and commutators}
Suppose that $G=M_1M_2$, where $M_1,M_2$ are closed subgroups of $G$, and $M_1$ is normal in $G$.
Let
\[
\chi_1\colon G\to R^+, \  \chi_2\colon G\to R^+, \ \omega_1\colon M_1\to R^+, \  \omega_2\colon M_2\to R^+
 \]
be continuous homomorphisms such that $\omega_1,\omega_2$ are trivial on $M_1\cap M_2$.
\begin{enumerate}
\item[(a)]
There is a well-defined map $\psi\colon G\to \dbU$, given by
\[
\psi(g)=\begin{bmatrix}
1&\chi_1(g)&\omega_1(g_1)+\omega_2(g_2)\\
0&1&\chi_2(g)\\
0&0&1
\end{bmatrix}
\]
where $g_1\in M_1$, $g_2\in M_2$, and $g=g_1g_2$.
\item[(b)]
The map $\psi$ is a homomorphism if and only if for every $g,g'\in G$ with decompositions $g=g_1g_2$ and $g'=g'_1g'_2$, where $g_1,g'_1\in M_1$ and $g_2,g'_2\in M_2$, one has
\[
\omega_1(g_2g'_1g_2\inv)-\omega_1(g'_1)=\chi_1(g)\chi_2(g').
\]
\end{enumerate}
\end{prop}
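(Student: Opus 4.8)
The plan is to prove both parts by direct computation, keeping careful track of the decomposition $g = g_1 g_2$ into a factor from $M_1$ and a factor from $M_2$.

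For part (a), the only issue is well-definedness: the entries $\chi_1(g)$ and $\chi_2(g)$ depend only on $g$, so the content is that $\omega_1(g_1)+\omega_2(g_2)$ is independent of the chosen decomposition $g=g_1g_2$. First I would take two decompositions $g = g_1 g_2 = g_1' g_2'$ with $g_1,g_1'\in M_1$, $g_2,g_2'\in M_2$, and set $x = (g_1')^{-1}g_1 = g_2'g_2^{-1}$, which lies in $M_1\cap M_2$. Then using that $\omega_1$ is a homomorphism on $M_1$, $\omega_2$ is a homomorphism on $M_2$, and both vanish on $M_1\cap M_2$, I would compute $\omega_1(g_1) = \omega_1(g_1') + \omega_1(x) = \omega_1(g_1')$ and similarly $\omega_2(g_2') = \omega_2(x g_2) = \omega_2(g_2)$, so the $(1,3)$-entry is well-defined.

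For part (b), I would write out the product $\psi(g)\psi(g')$ explicitly in $\dbU_2(R)$ using the multiplication rule for $3\times 3$ unitriangular matrices, and compare with $\psi(gg')$. The $(1,2)$- and $(2,3)$-entries match automatically because $\chi_1,\chi_2$ are homomorphisms. The $(1,3)$-entry of $\psi(g)\psi(g')$ equals $\bigl(\omega_1(g_1)+\omega_2(g_2)\bigr) + \chi_1(g)\chi_2(g') + \bigl(\omega_1(g_1')+\omega_2(g_2')\bigr)$, whereas for $\psi(gg')$ I must first choose a decomposition of $gg'$. Here I would use normality of $M_1$ in $G$: from $gg' = g_1 g_2 g_1' g_2' = g_1 (g_2 g_1' g_2^{-1}) g_2 g_2'$, and since $g_2 g_1' g_2^{-1}\in M_1$ (as $M_1\trianglelefteq G$) and $g_2 g_2'\in M_2$, a valid decomposition of $gg'$ has $M_1$-part $g_1 \cdot (g_2 g_1' g_2^{-1})$ and $M_2$-part $g_2 g_2'$. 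Applying the homomorphism properties of $\omega_1$ on $M_1$ and $\omega_2$ on $M_2$, the $(1,3)$-entry of $\psi(gg')$ becomes $\omega_1(g_1) + \omega_1(g_2 g_1' g_2^{-1}) + \omega_2(g_2) + \omega_2(g_2')$. Setting the two expressions for the $(1,3)$-entry equal and cancelling the common terms $\omega_1(g_1)+\omega_2(g_2)+\omega_2(g_2')$ leaves exactly the stated criterion $\omega_1(g_2 g_1' g_2^{-1}) - \omega_1(g_1') = \chi_1(g)\chi_2(g')$.

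I expect the main subtlety to be purely bookkeeping: making sure the chosen decomposition of $gg'$ is compatible with the (arbitrary) decompositions of $g$ and $g'$ that appear in the definition of $\psi$, and that part (a) guarantees the value of $\psi(gg')$ does not depend on this particular choice, so that the computation is legitimate. No deep input is needed — just the group axioms, normality of $M_1$, and the hypotheses on $\omega_1,\omega_2$; the one place to be attentive is that the condition in (b) must hold for \emph{every} pair of decompositions, which is automatic once it holds for one pair because changing decompositions alters both sides in a controlled way via elements of $M_1\cap M_2$ on which $\omega_1,\omega_2$ vanish.
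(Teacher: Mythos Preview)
Your proposal is correct and follows essentially the same approach as the paper: the paper handles (a) in one sentence (just invoking that $\omega_1,\omega_2$ vanish on $M_1\cap M_2$) while you spell out the argument with the element $x$, and for (b) both you and the paper use the same decomposition $gg'=g_1(g_2g_1'g_2^{-1})\cdot g_2g_2'$ and compare $(1,3)$-entries. Your closing remark about independence from the choice of decomposition is a sound extra observation that the paper leaves implicit.
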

\begin{proof}
(a) \quad
Since $\omega_1,\omega_2$ are trivial on $M_1\cap M_2$, the map $g\mapsto \omega_1(g_1)+\omega_2(g_2)$ is well-defined.
Hence $\psi$ is well-defined.

\medskip

(b) \quad
Take $g=g_1g_2$ and $g'=g'_1g'_2$ as in (b).
Then $gg'=g_1(g_2g'_1g_2\inv)g_2g'_2$, where $g_1(g_2g'_1g_2\inv)\in M_1$ and $g_2g'_2\in M_2$.
Hence
\[
\psi(gg')=\begin{bmatrix}
1&\chi_1(gg')&\omega_1(g_1(g_2g'_1g_2\inv))+\omega_2(g_2g'_2)\\
0&1&\chi_2(gg')\\
0&0&1
\end{bmatrix}.
\]
On the other hand
\[
\begin{split}
&\psi(g)\psi(g')=\begin{bmatrix}
1&\chi_1(g)&\omega_1(g_1)+\omega_2(g_2)\\
0&1&\chi_2(g)\\
0&0&1
\end{bmatrix}
\begin{bmatrix}
1&\chi_1(g')&\omega_1(g'_1)+\omega_2(g'_2)\\
0&1&\chi_2(g')\\
0&0&1
\end{bmatrix}\\
&=\begin{bmatrix}
1&\chi_1(g)+\chi_1(g')&\omega_1(g_1)+\omega_2(g_2)+\omega_1(g'_1)+\omega_2(g'_2)
+\chi_1(g)\chi_2(g')\\
0&1&\chi_2(g)+\chi_2(g')\\
0&0&1
\end{bmatrix}.
\end{split}
\]
The assertion now follows from the fact that $\chi_1,\chi_2,\omega_1,\omega_2$ are homomorphisms.
\end{proof}

We now apply this Proposition to the following special case:
Take $R=\dbZ/m$ for an integer $m\geq2$.
Let $\chi_1,\chi_2\in H^1(G,\dbZ/m)$ and $M_1=\Ker(\chi_1)$.
Assume further that $\sig_1\in G$ satisfies $\chi_1(\sig_1)=1$ and $\chi_2(\sig_1)=0$.
Note that $G=M_1\langle\sig_1\rangle={\cdot \hspace{-10pt}\bigcup}_{i=0}^{m-1}M_1\sig_1^i$.

The group $G$ acts on $H^1(M_1,\dbZ/m)$ by $(g\omega)(h)=\omega(ghg\inv)$ for $\omega\in H^1(M_1,\dbZ/m)$, $g\in G$, and $h\in M_1$.
This makes $H^1(M_1,\dbZ/m)$ a module over the group ring $(\dbZ/m)[G]$.

Let $\omega\colon M_1\to \dbZ/m$ be a continuous homomorphism which is trivial on $\sig_1^m$.
Given $z\in \dbZ/m$  we define a \textsl{map} $\psi_z\colon G\to\dbU$ by
\[
\psi_z(h\sig_1^i)=\begin{bmatrix}1&i&\omega(h)+iz\\
0&1&\chi_2(h)\\0&0&1\end{bmatrix},
\]
where $h\in M_1$ and $0\leq i\leq m-1$.

\begin{prop}
\label{rho mu a homomorphism}
The following conditions are equivalent:
\begin{enumerate}
\item[(a)]
There exists a continuous homomorphism $\rho\colon G\to\dbU$ such that $\rho_{12}=\chi_1$, $\rho_{23}=\chi_2$, and $\omega=\Res_{M_1}(\rho_{13})$.
\item[(b)]
The map $\psi_z\colon G\to\dbU$ is a homomorphism for some $z\in \dbZ/m$;
\item[(c)]
The map $\psi_z\colon G\to\dbU$ is a homomorphism for every $z\in \dbZ/m$;
\item[(d)]
$(\sig_1-1)\omega=\Res_{M_1}(\chi_2)$.
\end{enumerate}
Moreover, every homomorphism as in (a) is of the form $\psi_z$ for some $z\in \dbZ/m$.
\end{prop}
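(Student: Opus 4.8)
The plan is to prove the cyclic chain of implications (a)$\Rightarrow$(c)$\Rightarrow$(b)$\Rightarrow$(d)$\Rightarrow$(a), together with the final ``moreover'' clause, which will drop out of the implication (a)$\Rightarrow$(c). Throughout I will use freely that $G=M_1\langle\sig_1\rangle$ is the disjoint union of the cosets $M_1\sig_1^i$, $0\leq i\leq m-1$, and that $\omega$ is a homomorphism on $M_1$ trivial on $\sig_1^m$ (so $\omega(h)+iz$ depends only on $h$ and on $i\bmod m$, making $\psi_z$ well-defined as a map).

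\medskip

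First I would treat (b)$\Leftrightarrow$(c) and the ``moreover'' clause. Suppose $\psi_z$ is a homomorphism for one value $z$. Observe that $I+z'E_{13}\colon G\to\dbU$ is \emph{not} a homomorphism in general since $\pr_{13}$ on $G$ is not additive; instead one checks directly that $\psi_{z'}=\psi_z\cdot(\text{the central homomorphism }g\mapsto I+\chi_1(g)(z'-z)E_{13})$ --- indeed on $h\sig_1^i$ the extra $E_{13}$-entry is $i(z'-z)=(z'-z)\chi_1(h\sig_1^i)$. Since the second factor is a central homomorphism $G\to\dbU$ (its $(1,3)$-entry is $\chi_1$ times a constant, hence additive), the product $\psi_{z'}$ is again a homomorphism. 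This gives (b)$\Rightarrow$(c); (c)$\Rightarrow$(b) is trivial. For ``moreover'': if $\rho$ is any homomorphism as in (a), then $\rho(\sig_1)_{13}=:z$ and one computes $\rho(h\sig_1^i)=\rho(h)\rho(\sig_1)^i$; since $\rho(h)_{12}=\chi_1(h)=0$ for $h\in M_1$, the $(1,3)$-entry of $\rho(h)\rho(\sig_1)^i$ is $\rho(h)_{13}+i\rho(\sig_1)_{13}=\omega(h)+iz$, so $\rho=\psi_z$. This simultaneously gives (a)$\Rightarrow$(b) (with this particular $z$) and hence, via (b)$\Rightarrow$(c), the implication (a)$\Rightarrow$(c).

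\medskip

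Next, (b)$\Rightarrow$(d). Assume $\psi_z$ is a homomorphism. I would apply Proposition~\ref{psi a homomorphism and commutators} with $M_2=\langle\sig_1\rangle$, $\omega_1=\omega$, and $\omega_2$ the homomorphism $\langle\sig_1\rangle\to\dbZ/m$ sending $\sig_1\mapsto z$ --- this is legitimate since $\omega_1,\omega_2$ both vanish on $M_1\cap\langle\sig_1\rangle=\langle\sig_1^m\rangle$, $M_1$ is normal, and $G=M_1M_2$; the map there coincides with $\psi_z$. Then criterion (b) of that Proposition gives, for all $g=g_1\sig_1^i$, $g'=g'_1\sig_1^j$ in $G$,
\[
\omega\bigl(\sig_1^i g'_1\sig_1^{-i}\bigr)-\omega(g'_1)=\chi_1(g)\chi_2(g')=i\,\chi_2(g'_1).
\]
Specializing to $i=1$, $g'_1\in M_1$ arbitrary, the left side is $\bigl((\sig_1-1)\omega\bigr)(g'_1)$ and the right side is $\chi_2(g'_1)=\Res_{M_1}(\chi_2)(g'_1)$, which is exactly (d). (One should note that the general $i,j$ case is then automatic by iterating, so no information is lost in specializing.)

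\medskip

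Finally, (d)$\Rightarrow$(a): assuming $(\sig_1-1)\omega=\Res_{M_1}(\chi_2)$, I would run the previous paragraph in reverse. Fix any $z\in\dbZ/m$ (e.g.\ $z=0$), set $\omega_2\colon\langle\sig_1\rangle\to\dbZ/m$, $\sig_1\mapsto z$, and verify the commutator identity of Proposition~\ref{psi a homomorphism and commutators}(b): for $g=g_1\sig_1^i$, $g'=g'_1\sig_1^j$ one has $g_2g'_1g_2^{-1}=\sig_1^i g'_1\sig_1^{-i}$, and iterating $(d)$ gives $\omega(\sig_1^i g'_1\sig_1^{-i})-\omega(g'_1)=\sum_{k=0}^{i-1}\bigl((\sig_1-1)\omega\bigr)(\sig_1^k g'_1\sig_1^{-k})$; since $\sig_1^k g'_1\sig_1^{-k}\in M_1$ and $(\sig_1-1)\omega=\Res_{M_1}(\chi_2)$ with $\chi_2$ a homomorphism fixed by conjugation in the relevant sense (more precisely $\chi_2(\sig_1^k g'_1\sig_1^{-k})=\chi_2(g'_1)$ because $\chi_2$ is a homomorphism and $\chi_2(\sig_1)=0$, so $\chi_2$ is $\sig_1$-invariant on $M_1$), the sum telescopes to $i\,\chi_2(g'_1)=\chi_1(g)\chi_2(g')$. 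Hence $\psi_z$ is a homomorphism; it is continuous since $\omega,\chi_2$ are and the coset decomposition is into clopen pieces; and by construction $(\psi_z)_{12}=\chi_1$, $(\psi_z)_{23}=\chi_2$, $\Res_{M_1}(\psi_z)_{13}=\omega$, giving (a).

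\medskip

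\emph{Main obstacle.} The delicate point is the reduction of the commutator condition in Proposition~\ref{psi a homomorphism and commutators}(b) --- which a priori ranges over \emph{all} pairs $g,g'\in G$ --- to the single tidy identity $(\sig_1-1)\omega=\Res_{M_1}(\chi_2)$. Making this rigorous requires (i) the telescoping computation of $\omega(\sig_1^i g'_1\sig_1^{-i})-\omega(g'_1)$ as a sum of $(\sig_1-1)\omega$ evaluated along the conjugates, and (ii) the observation that $\chi_2$ is invariant under $\sig_1$-conjugation on $M_1$ (using $\chi_2(\sig_1)=0$ and additivity of $\chi_2$), so that the right-hand sides collapse correctly. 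Everything else is bookkeeping with the coset decomposition and the explicit $2\times2$ unitriangular multiplication.
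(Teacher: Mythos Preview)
Your proof is correct and follows essentially the same route as the paper: both invoke Proposition~\ref{psi a homomorphism and commutators} with $M_2=\langle\sig_1\rangle$, $\omega_1=\omega$, $\omega_2(\sig_1^i)=iz$, and reduce the resulting commutator identity to the single case $i=1$ (your telescoping sum is exactly the paper's ``by induction on $i$''). The only organizational variation is that you establish (b)$\Leftrightarrow$(c) directly via multiplication by the central homomorphism $g\mapsto I+(z'-z)\chi_1(g)E_{13}$, whereas the paper deduces it implicitly from the observation that condition (d) does not involve $z$.
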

\begin{proof}
Consider the previous setup with
\[
M_2=\langle\sig_1\rangle,\   \omega_1=\omega, \  \omega_2(\sig_1^i)=iz.
\]
Note that $\omega_1,\omega_2$ are trivial on $M_1\cap\langle\sig_1\rangle=\langle\sig_1^m\rangle$.

\medskip

(a)$\Rightarrow$(b): \quad
For $\rho$ as in (a) we set  $z=\rho_{13}(\sig_1)\in \dbZ/m$.
Given $g=h\sig_1^i\in G$ with $h\in M_1$ and $0\leq i\leq m-1$, we have
\[
\rho(g)=\rho(h)\rho(\sig_1^i)
=\begin{bmatrix}1&0&\omega(h)\\0&1&\chi_2(h)\\0&0&1\end{bmatrix}
\begin{bmatrix}1&i&iz\\0&1&0\\0&0&1\end{bmatrix}
=\begin{bmatrix}1&i&\omega(h)+iz\\0&1&\chi_2(h)\\0&0&1\end{bmatrix}.
\]
Thus $\rho=\psi_z$.
This proves (b), as well as the last assertion.

\medskip

(c)$\Rightarrow$(a): \quad
We take $\rho=\psi_0$.

\medskip

(b)$\Rightarrow$(d)$\Rightarrow$(c): \quad
Let $z\in\dbZ/m$.
By Proposition \ref{psi a homomorphism and commutators},  $\psi_z$ is a homomorphism if and only if for every $g=h\sig_1^i$ and $g'=h'\sig_1^{i'}$, with $h,h'\in M_1$ and $0\leq i,i'\leq m-1$, one has
\[
\omega(\sig_1^ih'\sig_1^{-i})-\omega(h')=i\chi_2(h').
\]
By induction on $i$, this is equivalent to $\omega(\sig_1h'\sig_1\inv)-\omega(h')=\chi_2(h')$ for every $h'\in M_1$, which is (d).
\end{proof}

\section{Kummer formations}
\label{section on Kummer formations}
We recall from \cite{EfratMatzri17} the following terminology and setup.
Let $G$ be a profinite group and let $A$ be a discrete $G$-module.
For a closed normal subgroup $M$ of $G$ let $A^M$ be the submodule of $G$ fixed by $M$.
There is an induced $G/M$-action on $A^M$.

For open normal subgroups $M\leq M'$ of $G$ let $N_{M'/M}\colon A^M\to A^{M'}$ be the trace homomorphism $a\mapsto \sum_\sig\sig a$, where $\sig$ ranges over a system of representatives for the cosets of $M'$ modulo $M$.

Let $I_{M'/M}$ be the subgroup of $A^M$ consisting of all elements of the form $\sig a-a$ with $\sig\in M'$ and $a\in A^M$.
Note that $N_{M'/M}(I_{M'/M})=\{0\}$.
One sets
\[
\hat H^{-1}(M'/M,A^M)=\Ker(N_{M'/M})/I_{M'/M}
\]
(compare \cite{NeukirchSchmidtWingberg}*{Ch.\ I, \S7}).
When $M'/M$ is cyclic with generator $\sig M$, the subgroup $I_{M'/M}$ consists of all elements $\sig a-a$, with $a\in A^M$ (since $\sig^k-1=(\sig-1)\sum_{i=0}^{k-1}\sig^i$).
Then $\hat H^{-1}(M'/M,A^M)\isom H^1(M'/M,A^M)$  \cite{NeukirchSchmidtWingberg}*{Prop.\ 1.7.1}.

We fix again an integer $m\geq2$.

\begin{defin}
\label{Kummer formations}
\rm
An \textsl{$m$-Kummer formation} $(G,A,\{\kappa_M\}_M)$ consists of a profinite group $G$, a discrete $G$-module $A$, and for each open normal subgroup $M$ of $G$, a $G$-equivariant epimorphism $\kappa_M\colon A^M\to H^1(M,\dbZ/m)$ such that for every such $M$ the following conditions hold:
\begin{enumerate}
\item[(KF1)]
For every of $\chi\in H^1(M,\dbZ/m)$, there is an exact sequence
\[
\begin{split}
H^1(\Ker(\chi),\dbZ/m)\xrightarrow{\Cor_M}H^1(M,\dbZ/m)&\xrightarrow{\cup\chi}H^2(M,\dbZ/m) \\
&\xrightarrow{\Res_{\Ker(\chi)}} H^2(\Ker(\chi),\dbZ/m);
\end{split}
\]
\item[(KF2)]
$\Ker(\kappa_M)=mA^M$;
\item[(KF3)]
For every open normal subgroup $M'$ of $G$ such that $M\leq M'$, there are commutative squares
\[
\xymatrix{
A^M\ar[r]^{\kappa_M\qquad} & H^1(M,\dbZ/m)&&A^M\ar[r]^{\kappa_M\qquad}\ar[d]_{N_{M'/M}} & H^1(M,\dbZ/m)\ar[d]^{\Cor_{M'}} \\
A^{M'}\ar@{^{(}->}[u]\ar[r]^{\kappa_{M'}\qquad} & H^1(M',\dbZ/m)\ar[u]_{\Res_M},&&A^{M'}\ar[r]^{\kappa_{M'}\qquad} & H^1(M',\dbZ/m);
}
\]
\item[(KF4)]
For every open normal subgroup $M'$ of $G$ such that $M\leq M'$ and $M'/M$ is cyclic of order $m$ one has $\hat H^{-1}(M'/M,A^M)=0$.
\end{enumerate}
\end{defin}

\begin{exam}
\label{fields give Kummer formations}
\rm
Extending \cite{EfratMatzri17}*{Example 5.2} (which assumes that $m=p$ is prime), we observe that for a field $F$ of characteristic not dividing $m$ and containing the group $\mu_m$ of $m$th roots of unity,  there is an $m$-Kummer formation $(G_F,F_\sep^\times, \{\kappa_M\}_M)$, where $G_F=\Gal(F_\sep/F)$ is the absolute Galois group of $F$, and for every finite Galois extension $E$ of $F$ with $M=G_E$ the map $\kappa_M\colon E^\times\to H^1(G_E,\mu_m)$ is the \textsl{Kummer homomorphism}.
Thus $\kappa_M\colon H^0(G_E,F_\sep^\times)\to H^1(G_E,\mu_m)$ is the connecting homomorphism arising from the short exact sequence of $G_E$-modules
$1\to\mu_m\to F_\sep^\times\xrightarrow{m}F_\sep^\times\to1$, and is therefore $G_F$-equivariant.
Condition (KF1) corresponds to the isomorphism $E^\times/N_{L/E}L^\times\xrightarrow{\sim}\Br(L/E)$ for a cyclic extension $L/E$ \cite{Draxl83}*{p.\ 73}.
Condition (KF2) follows from the exact sequence $E^\times\xrightarrow{m}E^\times\xrightarrow{\kappa_M}H^1(G_E,\mu_m)$.
Condition (KF3) follows from the commutativity of connecting homomorphisms with restrictions and corestrictions.
For (KF4) use the isomorphism $\hat H^{-1}(M'/M,A^M)\isom H^1(M'/M,A^M)$ for $M'/M$ cyclic (noted above) and Hilbert's Theorem 90.
\end{exam}

\begin{rems}
\label{norms commute with inclusions}
\rm
(1) \quad
Let $M_1,M_2$ be open subgroups of $G$.
Then the following square commutes:
\[
\xymatrix{
A^{M_1}\ar@{^{(}->}[r]\ar[d]_{N_{M_1M_2/M_1}}&A^{M_1\cap M_2}\ar[d]^{N_{M_2/M_1\cap M_2}}\\
A^{M_1M_2}\ar@{^{(}->}[r]& A^{M_2}.
}
\]
Indeed, a system of representatives for the cosets of $M_2/(M_1\cap M_2)$ is also a system of representatives for the cosets of $M_1M_2/M_1$.

\medskip

(2) \quad
For normal open subgroups $M\leq M'$ of $G$ and for $\sig\in G$, one has a commutative square
\[
\xymatrix{
A^M\ar[d]_{N_{M'/M}}\ar[r]^{\sig}& A^M\ar[d]^{N_{M'/M}}\\
A^{M'}\ar[r]^{\sig}&A^{M'}.
}
\]
Here we may replace $\sig$ by any element of the group ring $\dbZ[G]$.
\end{rems}

\section{Hilbert 90}
\label{section on Hilbert 90}
Let $(G,A,\{\kappa_M\}_M)$ be an $m$-Kummer formation.
Let $M<M'$ be open subgroups of $G$ with $M'/M$ cyclic of order $m$.
Choose $\sig\in M'$ such that $M'=\langle M,\sig\rangle$.

Let $N_{M'/M}\inv(A^G)$ be the inverse image of $A^G(\subseteq A^{M'})$ under the map $N_{M'/M}\colon A^M\to A^{M'}$.
In view of (KF4), there is a commutative diagram with exact rows
\[
\xymatrix{
 A^M\ar[r]^{\sig-1\quad\  }&N_{M'/M}\inv(A^G)\ar[r]^{\quad\  N_{M'/M}}&A^G&\\
 &A^G\ar[r]^{m}\ar@{^{(}->}[u]&mA^G\ar[r]\ar@{^{(}->}[u]&0.
}
\]
We deduce the following variant of Hilbert theorem 90 for Kummer formations:

\begin{prop}
\label{exact sequence}
in the above setup, there is an induced exact sequence
\[
A^M\xrightarrow{\sig-1} N_{M'/M}\inv(A^G)/A^G\xrightarrow{\bar N_{M'/M}} A^G/mA^G.
\]
\end{prop}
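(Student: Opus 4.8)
The statement is essentially the "Snake-Lemma-for-formations" packaging of the diagram displayed just before the Proposition, so the plan is to extract the exact sequence from that commutative diagram with exact rows. The top row is exact by hypothesis (KF4): indeed, $\hat H^{-1}(M'/M,A^M) = \Ker(N_{M'/M})/I_{M'/M} = 0$, and since $M'/M$ is cyclic of order $m$ generated by $\sig M$, the subgroup $I_{M'/M}$ equals $(\sig-1)A^M$; hence $\Ker(N_{M'/M}|_{A^M}) = (\sig - 1)A^M$, which gives exactness of $A^M \xrightarrow{\sig-1} N_{M'/M}\inv(A^G) \xrightarrow{N_{M'/M}} A^G$ at the middle term (note $N_{M'/M}\inv(A^G)$ contains $(\sig-1)A^M$ since $N_{M'/M}\circ(\sig-1)=0$). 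The bottom row $A^G \xrightarrow{m} mA^G \to 0$ is exact by definition of $mA^G$, and the vertical maps $A^G \hookrightarrow N_{M'/M}\inv(A^G)$ and $mA^G \hookrightarrow A^G$ are the obvious inclusions; these are injective, which is what makes the quotients behave well.

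**Key steps.** First I would verify the diagram commutes: the left square commutes because for $a \in A^G$ we have $(\sig-1)(ma) = m(\sig-1)a$ but more to the point $N_{M'/M}(a)=ma$ for $a\in A^G$ (the trace over $m$ cosets of a $G$-fixed element), so the composite $A^G \xrightarrow{m} mA^G \hookrightarrow A^G$ agrees with $A^G \hookrightarrow N_{M'/M}\inv(A^G) \xrightarrow{N_{M'/M}} A^G$; wait—more carefully, the left square as drawn has $A^G \xrightarrow{m} mA^G$ on the bottom and $A^M \xrightarrow{\sig-1} N_{M'/M}\inv(A^G)$ on top with vertical inclusions $A^G\hookrightarrow A^M$... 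I would rather just say: commutativity of both squares is immediate from the definitions, since all maps are either inclusions, multiplication by $m$, $\sig - 1$, or $N_{M'/M}$, and $N_{M'/M}$ restricted to $A^G$ is multiplication by $m$. Second, pass to the quotients: mod out the top-middle term $N_{M'/M}\inv(A^G)$ by the image $A^G$ of the vertical inclusion, and mod out the top-right term $A^G$ by $mA^G$. The induced map $\bar N_{M'/M}\colon N_{M'/M}\inv(A^G)/A^G \to A^G/mA^G$ is well-defined because $N_{M'/M}(A^G) = mA^G \subseteq mA^G$. Third, check exactness at the middle of the induced sequence $A^M \xrightarrow{\sig-1} N_{M'/M}\inv(A^G)/A^G \xrightarrow{\bar N_{M'/M}} A^G/mA^G$. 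For this: if $x \in N_{M'/M}\inv(A^G)$ has $N_{M'/M}(x) \in mA^G$, write $N_{M'/M}(x) = ma = N_{M'/M}(a)$ with $a \in A^G$; then $N_{M'/M}(x - a) = 0$, so by exactness of the top row $x - a = (\sig-1)y$ for some $y \in A^M$, hence $x \equiv (\sig-1)y \pmod{A^G}$. Conversely $\bar N_{M'/M}((\sig-1)y + A^G) = N_{M'/M}((\sig-1)y) + mA^G = 0$.

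**Main obstacle.** There is no real obstacle here — this is a diagram chase. The one point requiring slight care is the identification, when $M'/M$ is cyclic of order $m$ with generator $\sig M$, that $I_{M'/M} = (\sig-1)A^M$, which is exactly the observation recorded in Section~\ref{section on Kummer formations} (using $\sig^k - 1 = (\sig-1)\sum_{i=0}^{k-1}\sig^i$) and which converts the abstract vanishing $\hat H^{-1}(M'/M, A^M) = 0$ from (KF4) into the usable statement $\Ker(N_{M'/M}) \cap A^M = (\sig - 1)A^M$. One should also note that $\sig$ genuinely lies in $M'$ (chosen so that $M' = \langle M, \sig\rangle$), so that $\sig - 1$ and $N_{M'/M}$ both act on the relevant groups and the expression $(\sig - 1)A^M$ makes sense; this is guaranteed by the choice made at the start of Section~\ref{section on Hilbert 90}. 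Everything else is formal, so the proof is short.
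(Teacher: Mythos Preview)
Your proposal is correct and follows exactly the approach the paper takes: the paper simply displays the commutative diagram with exact rows (exactness of the top row coming from (KF4) together with the identification $I_{M'/M}=(\sig-1)A^M$ for cyclic $M'/M$) and states that the proposition follows, leaving the diagram chase you have written out to the reader. Your key step --- writing $N_{M'/M}(x)=ma=N_{M'/M}(a)$ for $a\in A^G$ and using top-row exactness to get $x-a\in(\sig-1)A^M$ --- is precisely the intended one; the only cosmetic issue is the mid-sentence hesitation about a ``left square'' (there is just one square in the diagram), which you correctly resolve.
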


We say that open normal subgroups $M_1,M_3$ of $G$ are \textsl{$m$-independent} if $G_1/M_1\isom G/M_3\isom\dbZ/m$ and the natural map $G/(M_1\cap M_3)\to(G/M_1)\times (G/M_3)$ is an isomorphism
(the unnatural indexing is due to its application in the next section).
For $\chi_1,\chi_3\in H^1(G,\dbZ/m)$, we note that $M_i=\Ker(\chi_i)$, $i=1,3$, satisfy this condition if and only if $\chi_1,\chi_3$ are $\dbZ/m$-linearly independent.

The next proposition is the key technical step in our proof.

\begin{prop}
\label{key prop}
Let $M_1,M_3$ be open normal subgroups of $G$ which are $m$-independent, and set $M=M_1\cap M_3$.
Let $y_i\in A^{M_i}$, $i=1,3$, satisfy $\kappa_G(N_{G/M_1}(y_1))=\kappa_G(N_{G/M_3}(y_3))$.
Let $\sig_1\in M_3$ satisfy $G=\langle M_1,\sig_1\rangle$.
Then there exists $t\in A^M$ such that
\[
(\sig_1-1)N_{M_1/M}t=N_{G/M_1}y_1\pmod{mA^{M_1}}.
\]
\end{prop}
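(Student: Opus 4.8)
The plan is to transport the hypothesis $\kappa_G(N_{G/M_1}(y_1))=\kappa_G(N_{G/M_3}(y_3))$ through the $m$-independence hypothesis and then apply the Hilbert~90 sequence of Proposition \ref{exact sequence}. First I would exploit $m$-independence: since $G/(M_1\cap M_3)\to (G/M_1)\times(G/M_3)$ is an isomorphism and $\sig_1\in M_3$ generates $G$ modulo $M_1$, the subgroup $M_3$ maps isomorphically onto $G/M_1$, so $M_3/M$ is cyclic of order $m$ generated by $\sig_1 M$. Thus the pair $M<M_3$ (playing the role of $M<M'$ in \S\ref{section on Hilbert 90}, with $\sig=\sig_1$) satisfies the hypotheses of Proposition \ref{exact sequence}, \emph{but over the base group $M_1$ rather than $G$}: I will apply that machinery with $G$ replaced by $M_1$, $M'$ replaced by $M_3\cap(\text{something})$ — more precisely, note $M_1M_3=G$ by $m$-independence, so $M_1/M\cong M_1/(M_1\cap M_3)\cong G/M_3\cong\dbZ/m$, which is the cyclic extension I actually want to run Hilbert~90 over. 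Let me restart the bookkeeping accordingly.

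The cleaner route: set $z=N_{G/M_1}(y_1)\in A^{M_1}$. The target congruence asks for $t\in A^M$ with $(\sig_1-1)N_{M_1/M}(t)\equiv z\pmod{mA^{M_1}}$. Apply Proposition \ref{exact sequence} to the Kummer formation with ambient group $M_1$ in place of $G$, the cyclic extension being $M<M_1$ — wait, $M$ is not normal in $M_1$ with cyclic quotient of order $m$ unless... Actually $M=M_1\cap M_3\trianglelefteq M_1$ and $M_1/M\cong\dbZ/m$ by $m$-independence, so this is exactly the setup of \S\ref{section on Hilbert 90} with $(G,M',M)\rightsquigarrow(M_1,M_1,M)$? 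No — there $M<M'$ are both subgroups of the ambient group and $\sig\in M'$. Here take ambient group $=M_1$, $M'=M_1$, $M=M_1\cap M_3$; then $\sig_1\notin M_1$. So instead take $\sig\in M_1$ generating $M_1/M$; but the proposition wants $\sig_1\in M_3$. The key point reconciling this is that $\sig_1$ acts on $A^M$ and on $N_{M_1/M}^{-1}(A^{M_1})/A^{M_1}$, and because $M_1M_3=G$, conjugation by $\sig_1$ and by any generator of $M_1/M$ induce the \emph{same} automorphism of the relevant Herbrand-type quotient — this uses Remarks \ref{norms commute with inclusions}(2) and the commutativity of the $N$-maps with the $G$-action. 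So the surjectivity/exactness statement "$(\sig_1-1)\colon A^M\to N_{M_1/M}^{-1}(A^{M_1})/A^{M_1}$ has image equal to $\Ker(\bar N_{M_1/M})$" is what I will extract from Proposition \ref{exact sequence}.

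It then remains to show $z=N_{G/M_1}(y_1)$ lies (modulo $A^{M_1}$, i.e.\ modulo $mA^{M_1}$ after the obvious identification $A^{M_1}/mA^{M_1}$ via (KF2)) in the kernel of $\bar N_{M_1/M}$, equivalently that $N_{M_1/M}(z)\in A^G$ maps to $0$ in $A^G/mA^G$. Compute $N_{M_1/M}(N_{G/M_1}(y_1))=N_{G/M}(y_1)$; by Remarks \ref{norms commute with inclusions}(1) (with the roles of $M_1,M_3$) this equals $N_{M_3/M}(N_{G/M_3}(y_1))$... the indices must be chased carefully, but the upshot via transitivity of norms is $N_{G/M}(y_1)=N_{M_3/M}\bigl(N_{G/M_3}(y_1)\bigr)$ using $M_3M_1=G$ and $M_3\cap M_1=M$. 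Now I need this to be divisible by $m$ in $A^G$. Here the hypothesis $\kappa_G(N_{G/M_1}(y_1))=\kappa_G(N_{G/M_3}(y_3))$ enters, together with (KF2): it gives $N_{G/M_1}(y_1)-N_{G/M_3}(y_3)\in mA^G$, hence $N_{G/M_1}(y_1)\equiv N_{G/M_3}(y_3)\pmod{mA^G}$, and $N_{M_3/M}$ applied to the latter — wait, these live in $A^G$, not $A^{M_1}$ or $A^{M_3}$, so I instead use (KF3): $\kappa_G\circ N_{G/M_3}=\Cor\circ\kappa_{M_3}$, and the class $\kappa_{M_3}(y_3)\cup(\text{character of }G/M_3)$ vanishes because $G/M_3$ is cyclic — this is where a norm-killing argument or the fact that $\Cor\circ\Res$ is multiplication by the index kicks in. \textbf{The main obstacle} is precisely this last step: showing that the common value $\kappa_G(N_{G/M_1}(y_1))$ is annihilated appropriately so that after pulling back through $\bar N_{M_1/M}$ one lands in the image of $\sig_1-1$; concretely, one must verify $N_{G/M}(y_1)\in mA^G$, which I expect to follow by writing $N_{G/M}(y_1)=N_{M_1/M}(N_{G/M_1}(y_1))$ and observing that $\kappa_{M_1}(N_{G/M_1}(y_1))=\Res_{M_1}\kappa_G(N_{G/M_1}(y_1))=\Res_{M_1}\kappa_G(N_{G/M_3}(y_3))$ restricts to $0$ on $M_1$ because $N_{G/M_3}(y_3)$ already lies in $A^G$ with $\kappa_G$ of it killed by cup product with $\chi_3$, and $\Res_{M_1}\chi_3$ generates $H^1(M_1/M)$ — so (KF1) for $M_1$ and the character $\Res_{M_1}\chi_3$ forces $\kappa_{M_1}(z)$ into the image of $\Cor$ from $\Ker(\Res_{M_1}\chi_3)=M$, i.e.\ $\kappa_{M_1}(z)\in\Img(\Cor_{M_1}\colon H^1(M)\to H^1(M_1))=\Img(\kappa_{M_1}\circ N_{M_1/M})$ by (KF3). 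Unwinding through (KF2) gives $z\equiv N_{M_1/M}(w)\pmod{mA^{M_1}}$ for some $w\in A^M$, and then $t$ is built from $w$ by solving $(\sig_1-1)(\text{correction})$ via the exactness in Proposition \ref{exact sequence} applied over $M_1$ — possibly $t=w$ works directly if $N_{M_1/M}(w)$ is already $\sig_1-1$ of the right thing, but in general a second application of Hilbert~90 produces the needed $t$.
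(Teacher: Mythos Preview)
Your proposal does not close, and the missing idea is quite specific. You correctly identify that Proposition~\ref{exact sequence} must be invoked, but you never find a cyclic extension $M\lhd M'$ of degree $m$ to which it can be applied so that \emph{both} $y_1$ and $y_3$ are simultaneously in play. Your attempt to use $M'=M_3$ (with $\sig=\sig_1$) founders because the element you feed into the exact sequence would have to satisfy $\bar N_{M_3/M}(\,\cdot\,)=0$ in $A^G/mA^G$, and you have no way to arrange this for $y_1$ alone; your attempt to use $M'=M_1$ founders because $\sig_1\notin M_1$, and your claim that $\sig_1$ and a generator of $M_1/M$ ``induce the same automorphism'' of the relevant quotient is simply false (they generate \emph{different} cyclic subgroups of $G/M$). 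Your final route --- produce $w\in A^M$ with $z\equiv N_{M_1/M}(w)\pmod{mA^{M_1}}$ via (KF1) and (KF3), then ``build $t$ from $w$ by a second application of Hilbert~90'' --- breaks at the last step: Hilbert~90 over $M_1\lhd G$ gives $s\in A^{M_1}$ with $(\sig_1-1)s\equiv z$, but nothing forces $s$ to lie in the image of $N_{M_1/M}$, which is exactly what the statement demands.

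The paper's device is to introduce the \emph{diagonal} subgroup $M'=\langle M,\sig_1\sig_3\rangle$, where $\sig_3\in M_1$ generates $G/M_3$. Then $M'/M$ is cyclic of order $m$, and by Remark~\ref{norms commute with inclusions}(1) (applied to the pairs $(M_1,M')$ and $(M_3,M')$) one has $N_{M'/M}(y_3-y_1)=N_{G/M_3}(y_3)-N_{G/M_1}(y_1)\in A^G$, which lies in $mA^G$ precisely by the hypothesis and (KF2). Now Proposition~\ref{exact sequence} (with $\sig=\sig_1\sig_3$) yields $t\in A^M$ with $(\sig_1\sig_3-1)t\equiv y_3-y_1\pmod{A^G}$. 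Finally, apply $N_{M_1/M}$: since $\sig_3\in M_1$ acts trivially on $A^{M_1}$, the operator $\sig_1\sig_3-1$ becomes $\sig_1-1$ after norming, and $N_{M_1/M}(y_3)=N_{G/M_3}(y_3)$, $N_{M_1/M}(y_1)=my_1$, giving the desired congruence. The whole point is that the diagonal choice of $M'$ lets you exploit the \emph{difference} $y_3-y_1$, whose norm is controlled by the hypothesis, rather than either $y_i$ separately.
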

\begin{proof}
We choose  $\sig_3\in M_1$ such that $G=\langle M_3,\sig_3\rangle$, and denote $M'=\langle M,\sig_1\sig_3\rangle$.
Since $M_1,M_3$ are $m$-independent, $(M':M)=m$ and
\[
G=M_1M_3=M_1M'=M_3M', \quad M=M_1\cap M'=M_3\cap M' .
\]
By Remark \ref{norms commute with inclusions}(1), the following diagram commutes:
\[
\xymatrix{
A^{M_1}\ar@{^{(}->}[r]\ar[d]_{N_{G/M_1}}&A^M\ar[d]^{N_{M'/M}}
&A^{M_3}\ar@{_{(}->}[l]\ar[d]^{N_{G/M_3}}\\
A^G\ar@{^{(}->}[r]& A^{M'}&
A^G\ar@{_{(}->}[l].
}
\]
Hence
\begin{equation}
\label{norms and kappa}
N_{M'/M}(y_3-y_1)=N_{G/M_3}y_3-N_{G/M_1}y_1\in A^G\ (\subseteq A^{M_1}),
 \end{equation}
 so $y_3-y_1\in N_{M'/M}\inv(A^G)$, and
\[
\kappa_G(N_{M'/M}(y_3-y_1))=\kappa_G(N_{G/M_3}y_3)-\kappa_G(N_{G/M_1}y_1)=0.
\]
By (KF2),
\begin{equation}
\label{norm in mAG}
N_{M'/M}(y_3-y_1)\in mA^G.
\end{equation}
Therefore Proposition \ref{exact sequence} (with $\sig=\sig_1\sig_3$) yields $t\in A^M$ and $b\in A^G$ such that
\[
(\sig_1\sig_3-1)t=y_3-y_1+b.
\]

As $\sig_3\in M_1$, Remark \ref{norms commute with inclusions} gives commutative squares
\[
\xymatrix{
A^M\ar[r]^{\sig_1\sig_3-1}\ar[d]_{N_{M_1/M}}&A^M\ar[d]^{N_{M_1/M}}&&  A^{M_3}\ar@{^{(}->}[r]\ar[d]_{N_{G/M_3}}& A^M\ar[d]^{N_{M_1/M}}\\
A^{M_1}\ar[r]^{\sig_1-1}&A^{M_1},&&  A^G\ar@{^{(}->}[r]& A^{M_1}.
}
\]
Considering $y_1,y_3$ as elements of $A^M$, we obtain
\[
\begin{split}
&(\sig_1-1)N_{M_1/M}t
=N_{M_1/M}((\sig_1\sig_3-1)t)\\
=&N_{M_1/M}(y_3)-N_{M_1/M}(y_1)+N_{M_1/M}(b)\\
=&N_{G/M_3}(y_3)-my_1+mb\equiv N_{G/M_3}(y_3)
\equiv N_{G/M_1}(y_1)\pmod{mA^{M_1}},
\end{split}
\]
where in the last step we used (\ref{norms and kappa}) and (\ref{norm in mAG}).
\end{proof}

We now interpret the previous proposition in cohomological terms.

\begin{cor}
\label{cor to key prop}
Let $\bar\rho\colon G\to\bar\dbU_3(\dbZ/m)$ be a continuous homomorphism.
Assume that $M_i=\Ker(\bar\rho_{i,i+1})$, $i=1,3$, are $m$-independent, and set $M=M_1\cap M_3$.
Let  $\sig_1\in M_3$ satisfy $G=\langle M_1,\sig_1\rangle$.
Then there exists  $\omega\in H^1(M_1,\dbZ/m) $ such that
\[
\omega\in\Cor_{M_1}H^1(M,\dbZ/m) , \quad (\sig_1-1)\omega=\Res_{M_1}(\bar\rho_{23}).
\]
\end{cor}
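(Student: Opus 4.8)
The plan is to translate Corollary~\ref{cor to key prop} into the module-theoretic language of the Kummer formation and then simply invoke Proposition~\ref{key prop}. First I would set up the data: since $\bar\rho_{i,i+1}\colon G\to\dbZ/m$ is a homomorphism and $M_i=\Ker(\bar\rho_{i,i+1})$, the $m$-independence hypothesis matches the one in Proposition~\ref{key prop}. By surjectivity of $\kappa_G\colon A^G\to H^1(G,\dbZ/m)$ (part of the definition of an $m$-Kummer formation), choose $x_i\in A^G$ with $\kappa_G(x_i)=\bar\rho_{i,i+1}$ for $i=1,3$; more usefully, I want preimages living one level down. Using the commutative square in (KF3) relating $\kappa_M$, $\kappa_{M'}$ and $\Res$, together with the fact that $\Res_{M_i}(\bar\rho_{i,i+1})=0$ (as $M_i$ is the kernel), one sees that $\kappa_{M_i}$ of any lift of $x_i$ to $A^{M_i}$ has trivial restriction; but what I actually need is an element $y_i\in A^{M_i}$ with $\kappa_G(N_{G/M_i}(y_i))=\bar\rho_{i,i+1}$. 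The cleanest route is the projection formula / (KF3) corestriction square: $\kappa_G(N_{G/M_i}(y_i))=\Cor_{M_i}(\kappa_{M_i}(y_i))$, so I need $y_i\in A^{M_i}$ with $\Cor_{M_i}(\kappa_{M_i}(y_i))=\bar\rho_{i,i+1}$.

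Next I would check this corestriction is solvable. Here is where I expect the main technical friction. One approach: $\kappa_{M_i}$ is onto $H^1(M_i,\dbZ/m)$, and I want $\bar\rho_{i,i+1}$ to lie in the image of $\Cor_{M_i}\colon H^1(M_i,\dbZ/m)\to H^1(G,\dbZ/m)$. Since $G/M_i\isom\dbZ/m$ is generated by (the image of) $\sig_{i'}$ where $\sig_1\in M_3$, $\sig_3\in M_1$, and $\bar\rho_{i,i+1}$ vanishes on $M_i$, the class $\bar\rho_{i,i+1}$ is inflated from $H^1(G/M_i,\dbZ/m)$; by the (KF1) exact sequence applied to $\chi_i=\bar\rho_{i,i+1}$ itself (or to a suitable generating character), $\bar\rho_{i,i+1}$ maps to $0$ under $\cup\,\bar\rho_{i,i+1}$ trivially — more to the point, I would instead directly produce $y_i$ by taking \emph{any} $y_i\in A^{M_i}$ with $\kappa_{M_i}(y_i)$ equal to the restriction to $M_i$ of a convenient global class, and then compute $\Cor_{M_i}(\kappa_{M_i}(y_i))=\kappa_G(N_{G/M_i}y_i)$ via the (KF3) square. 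The upshot I want to land on is: there exist $y_1\in A^{M_1}$, $y_3\in A^{M_3}$ with $\kappa_G(N_{G/M_1}y_1)=\kappa_G(N_{G/M_3}y_3)$ — and the common value should be arranged to equal (or at least to have the right relation to) $\bar\rho_{23}$; the role of $\bar\rho_{23}$ is that it is the character we ultimately want $(\sig_1-1)\omega$ to equal.

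Once the $y_i$ are in hand with the matching condition $\kappa_G(N_{G/M_1}y_1)=\kappa_G(N_{G/M_3}y_3)$, Proposition~\ref{key prop} hands me $t\in A^M$ with $(\sig_1-1)N_{M_1/M}(t)\equiv N_{G/M_1}y_1\pmod{mA^{M_1}}$. Then I set $\omega=\kappa_{M_1}(N_{M_1/M}(t))\in H^1(M_1,\dbZ/m)$. Applying $\kappa_{M_1}$ to the congruence and using (KF2) ($\kappa_{M_1}$ kills $mA^{M_1}$) together with $G$-equivariance of $\kappa_{M_1}$ (so $\kappa_{M_1}\circ(\sig_1-1)=(\sig_1-1)\circ\kappa_{M_1}$, where $\sig_1$ acts on $H^1(M_1,\dbZ/m)$ in the sense of Section~\ref{section on restrictions of homomorphisms}) gives $(\sig_1-1)\omega=\kappa_{M_1}(N_{G/M_1}y_1)$. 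Finally, $\kappa_{M_1}(N_{G/M_1}y_1)=\Res_{M_1}(\kappa_G(N_{G/M_1}y_1))$ by the restriction square in (KF3), and since $\kappa_G(N_{G/M_1}y_1)=\bar\rho_{23}$ (by the choice of the $y_i$), this equals $\Res_{M_1}(\bar\rho_{23})$, as required. For the membership $\omega\in\Cor_{M_1}H^1(M,\dbZ/m)$: by the corestriction square in (KF3), $\omega=\kappa_{M_1}(N_{M_1/M}(t))=\Cor_{M_1}(\kappa_M(t))$, and $\kappa_M(t)\in H^1(M,\dbZ/m)$, so $\omega$ lies in the image of $\Cor_{M_1}$. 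The only genuine obstacle, as flagged, is the bookkeeping to choose $y_1,y_3$ so that their norms have $\kappa_G$-value exactly $\bar\rho_{23}$; everything after Proposition~\ref{key prop} is a routine diagram chase through the (KF2)--(KF3) squares and the equivariance of $\kappa$.
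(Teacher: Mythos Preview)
Your overall architecture is exactly the paper's: produce $y_i\in A^{M_i}$ with $\kappa_G(N_{G/M_i}y_i)=\bar\rho_{23}$, feed them into Proposition~\ref{key prop} to get $t\in A^M$, set $\omega=\Cor_{M_1}(\kappa_M(t))=\kappa_{M_1}(N_{M_1/M}t)$, and then read off both conclusions from (KF2), (KF3), and the $G$-equivariance of $\kappa$. That final diagram chase is fine.

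The gap is precisely the step you flag as ``bookkeeping,'' and it is not bookkeeping. You repeatedly aim at the wrong target: you try to put $\bar\rho_{i,i+1}$ (i.e.\ $\chi_1$ or $\chi_3$) into $\Img\bigl(\Cor_G\colon H^1(M_i,\dbZ/m)\to H^1(G,\dbZ/m)\bigr)$, invoking (KF1) for $\chi_i\cup\chi_i$. What is actually needed is that $\bar\rho_{23}$ lies in this image, for \emph{both} $i=1$ and $i=3$. By (KF1) applied with $\chi=\bar\rho_{12}$ (resp.\ $\chi=\bar\rho_{34}$), this amounts to $\bar\rho_{23}\cup\bar\rho_{12}=0$ (resp.\ $\bar\rho_{23}\cup\bar\rho_{34}=0$). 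These vanishings are not formal; they are exactly where the hypothesis that $\bar\rho$ is a homomorphism into $\bar\dbU_3(\dbZ/m)$ enters. The paper supplies them via Lemma~\ref{vanishing of cup products}: the existence of the entries $\bar\rho_{13}$ and $\bar\rho_{24}$ forces $\bar\rho_{12}\cup\bar\rho_{23}=0=\bar\rho_{23}\cup\bar\rho_{34}$. Once you have that, (KF1) gives classes in $H^1(M_i,\dbZ/m)$ whose corestriction is $\bar\rho_{23}$; lifting through the surjection $\kappa_{M_i}$ and using the (KF3) corestriction square produces the desired $y_i$. Without this input your proof has no mechanism to guarantee that such $y_i$ exist, and your attempts via $\bar\rho_{i,i+1}$ or ``any $y_i$'' do not get there.
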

\begin{proof}
By Lemma \ref{vanishing of cup products}, $\bar\rho_{12}\cup\bar\rho_{23}=0=\bar\rho_{23}\cup\bar\rho_{34}$.
For both $i=1,3$, (KF1) implies that $\bar\rho_{23}$ is in the image of $\Cor_G\colon H^1(M_i,\dbZ/m)\to H^1(G,\dbZ/m)$.
Hence there exists $y_i\in A^{M_i}$ with $\Cor_G(\kappa_{M_i}(y_i))=\bar\rho_{23}$.
By (KF3), $\kappa_G(N_{G/M_i}y_i)=\bar\rho_{23}$.
Proposition \ref{key prop} yields $t\in A^M$ such that
\[
(\sig_1-1)N_{M_1/M}t\equiv N_{G/M_1}y_1\pmod{mA^{M_1}}.
\]
Let $\omega=\Cor_{M_1}(\kappa_M(t))$.
By (KF3) and Remark \ref{norms commute with inclusions}(2),
\[
\begin{split}
(\sig_1-1)\omega&=(\sig_1-1)\kappa_{M_1}(N_{M_1/M}(t))
=\kappa_{M_1}((\sig_1-1)N_{M_1/M}(t)) \\
&=\kappa_{M_1}(N_{G/M_1}(y_1))
=\Res_{M_1}\kappa_G(N_{G/M_1}(y_1))=\Res_{M_1}(\bar\rho_{23}).
\end{split}
\]
\end{proof}

\section{The main result}
\label{section on the main result}
In this section we take $n=3$, and set as before $R=\dbZ/m$ for $m\geq2$.
Then $\alp$ is the cohomology element corresponding to the sequence
\[
0\to \dbZ/m\to \dbU_3(\dbZ/m)\to\bar\dbU_3(\dbZ/m)\to1.
\]

Let $G$ be a profinite group, and let $\bar\rho\colon G\to\bar\dbU_3(\dbZ/m)$ be a continuous homomorphism.
For $i=1,3$ we set $M_i=\Ker(\bar\rho_{i,i+1})$, and $M=M_1\cap M_3$.
We recall that $\omega=\Res_{M_1}(\bar\rho_{13})\colon M_1\to \dbZ/m$ is a homomorphism, and view it as an element of $H^1(M_1,\dbZ/m)$.

\begin{prop}
\label{equivalence Cor and modified rho}
Assume that (KF1) holds for every open normal subgroup $M$ of $G$.
The following conditions are equivalent:
\begin{enumerate}
\item[(a)]
$\Res_{M_1}(\bar\rho^*\alp)=0$;
\item[(b)]
There exists a continuous homomorphism $\lam\colon G\to \dbZ/m$ such that $(\bar\rho(I-\lam E_{24}))^*\alp=0$;
\item[(c)]
$\omega\in\Cor_{M_1}(H^1(M,\dbZ/m))$.
\end{enumerate}
\end{prop}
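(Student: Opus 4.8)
The plan is to prove the cycle of implications (a) $\Rightarrow$ (c) $\Rightarrow$ (b) $\Rightarrow$ (a), using Example \ref{restriction to V} and Corollary \ref{cor to key prop} as the main inputs, together with Lemma \ref{modified homomorphism} for the modification step.

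First I would establish (a) $\Rightarrow$ (c). By Example \ref{restriction to V}, and in particular formula (\ref{Res M1 of pullback}), we have
\[
\Res_{M_1}(\bar\rho^*\alp)=\Res_{M_1}(\bar\rho_{13})\cup\Res_{M_1}(\bar\rho_{34})=\omega\cup\Res_{M_1}(\bar\rho_{34}).
\]
Now $\Res_{M_1}(\bar\rho_{34})$ is the restriction to $M_1$ of the homomorphism $\bar\rho_{34}=\chi_3$, whose kernel is $M_3$, so $\Ker(\Res_{M_1}(\bar\rho_{34}))=M_1\cap M_3=M$. Assuming (a), this cup product vanishes, so by the exact sequence (KF1) applied to the character $\Res_{M_1}(\bar\rho_{34})\in H^1(M_1,\dbZ/m)$ with kernel $M$, the class $\omega$ lies in the image of $\Cor_{M_1}\colon H^1(M,\dbZ/m)\to H^1(M_1,\dbZ/m)$, which is exactly (c).

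Next, (c) $\Rightarrow$ (b). Here I would reverse-engineer the modification: the goal is to choose $\lam\colon G\to\dbZ/m$ so that the modified homomorphism $\bar\rho':=\bar\rho\cdot(I-\lam E_{24})$ has trivial pullback. By Lemma \ref{modified homomorphism} (applied with the roles of the two modifiable entries; here only the $(2,4)$-entry is changed, so $\lam$ there is zero and $\lam'$ is our $-\lam$), $\bar\rho'$ is a homomorphism iff $\lam$ is, and then $(\bar\rho')^*\alp=\bar\rho^*\alp-\bar\rho_{12}\cup\lam=\bar\rho^*\alp-\chi_1\cup\lam$. So we must show $\bar\rho^*\alp\in\chi_1\cup H^1(G,\dbZ/m)$, i.e. $\bar\rho^*\alp$ lies in the image of $\cup\chi_1$. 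By (KF1) for $\chi_1=\bar\rho_{12}$ with kernel $M_1$, this image is exactly $\Ker\bigl(\Res_{M_1}\colon H^2(G,\dbZ/m)\to H^2(M_1,\dbZ/m)\bigr)$, so it suffices to show $\Res_{M_1}(\bar\rho^*\alp)=0$. But by the same computation (\ref{Res M1 of pullback}) as above, $\Res_{M_1}(\bar\rho^*\alp)=\omega\cup\Res_{M_1}(\bar\rho_{34})$, and assuming (c), $\omega=\Cor_{M_1}(\xi)$ for some $\xi\in H^1(M,\dbZ/m)$; then by the projection formula $\omega\cup\Res_{M_1}(\bar\rho_{34})=\Cor_{M_1}(\xi\cup\Res_M(\bar\rho_{34}))$, and $\Res_M(\bar\rho_{34})=\Res_M(\chi_3)=0$ since $M\subseteq M_3=\Ker(\chi_3)$. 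Hence $\Res_{M_1}(\bar\rho^*\alp)=0$, giving (a) — and thereby (b) after the modification above. Finally (b) $\Rightarrow$ (a) is immediate: since $I-\lam E_{24}$ changes neither $\bar\rho_{12}=\chi_1$ nor $\bar\rho_{34}=\chi_3$, the computation (\ref{Res M1 of pullback}) gives $\Res_{M_1}((\bar\rho(I-\lam E_{24}))^*\alp)=\omega\cup\Res_{M_1}(\bar\rho_{34})=\Res_{M_1}(\bar\rho^*\alp)$; so if the former vanishes, so does the latter.

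The main obstacle I expect is bookkeeping around the projection formula and making sure the use of (KF1) is legitimate: (KF1) is stated for an arbitrary open normal subgroup $M$ and a character $\chi\in H^1(M,\dbZ/m)$, so I must check that $M_1$ and $M$ are open normal in $G$ and that the relevant characters ($\chi_1$ on $G$, $\Res_{M_1}(\bar\rho_{34})$ on $M_1$) have the kernels claimed — in the (a)$\Leftrightarrow$(c) part this needs $M_1\cap M_3=M$, which is built into the hypothesis that $M_i=\Ker(\bar\rho_{i,i+1})$. The $m$-independence of $M_1,M_3$ is not actually needed for this proposition (it enters only through Corollary \ref{cor to key prop} elsewhere); the projection formula $\Cor(\xi)\cup\eta=\Cor(\xi\cup\Res\,\eta)$ is standard Galois cohomology and may be cited. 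Modulo these checks, the argument is a short diagram chase built on (\ref{Res M1 of pullback}), Lemma \ref{modified homomorphism}, and (KF1).
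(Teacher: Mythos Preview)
Your argument is correct and follows the same route as the paper: both hinge on formula (\ref{Res M1 of pullback}) to identify $\Res_{M_1}(\bar\rho^*\alp)$ with $\omega\cup\Res_{M_1}(\bar\rho_{34})$, then invoke (KF1) twice---once for $\chi_1$ on $G$ to link (a) and (b) via Lemma \ref{modified homomorphism}, and once for $\Res_{M_1}(\bar\rho_{34})$ on $M_1$ to link (a) and (c). Two minor remarks: your detour through the projection formula for (c)$\Rightarrow$(a) is unnecessary, since that implication is already the exactness at $H^1(M_1,\dbZ/m)$ in (KF1) that you used for (a)$\Rightarrow$(c); and Corollary \ref{cor to key prop}, which you list as a main input, plays no role here (as you yourself note at the end).
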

\begin{proof}
(a)$\Leftrightarrow$(b): \quad
For every $\lam\in H^1(G,\dbZ/m)$, Lemma \ref{modified homomorphism} shows that
\[
\bar\rho'=\bar\rho(I-\lam E_{24})=\begin{bmatrix}1&\bar\rho_{12}&\bar\rho_{13}&\\0&1&\bar\rho_{23}&\bar\rho_{24}-\lam\\0&0&1&\bar\rho_{34}\\0&0&0&1\end{bmatrix}
\colon G\to\bar\dbU
\]
is a continuous homomorphism, and $(\bar\rho')^*\alp=\bar\rho^*\alp-\bar\rho_{12}\cup\lam$.
Now by (KF1), $\Res_{M_1}(\bar\rho^*\alp)=0$  if and only if $\bar\rho^*\alp=\bar\rho_{12}\cup\lam$ for some $\lam\in H^1(G,\dbZ/m)$.

\medskip

(a)$\Leftrightarrow$(c): \quad
By (\ref{Res M1 of pullback}),
$\Res_{M_1}(\bar\rho^*\alp)=\omega\cup\Res_{M_1}(\bar\rho_{34})$.
We now apply (KF1) for the group $M_1$ and the homomorphism $\Res_{M_1}(\bar\rho_{34})$.
\end{proof}

\medskip

We now prove the Main Theorem in the more general context of $m$-Kummer formations.
The statement in terms of Massey products given in the Introduction follows from the statement below, in view of the interpretation of Massey product elements as pullbacks $\bar\rho^*\alp$, given in \S\ref{section on pullbacks}, and by Example \ref{fields give Kummer formations}.

\begin{thm}
Let $G$ be the underlying group of an $m$-Kummer formation.
Let $\chi_1,\chi_2,\chi_3\in H^1(G,\dbZ/m)$, where $\chi_1,\chi_3$ are $\dbZ/m$-linearly independent.
If there exists a continuous homomorphism $\bar\rho\colon G\to\bar\dbU_3(\dbZ/m)$ with $\chi_i=\bar\rho_{i,i+1}$, $i=1,2,3$, then there exists such a  homomorphism for which $\bar\rho^*\alp=0$.
\end{thm}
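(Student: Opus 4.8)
The plan is to assemble the pieces developed in the preceding sections into a single chain of implications. We start from a continuous homomorphism $\bar\rho\colon G\to\bar\dbU_3(\dbZ/m)$ with $\bar\rho_{i,i+1}=\chi_i$; set $M_i=\Ker(\chi_i)$ for $i=1,3$, which are $m$-independent since $\chi_1,\chi_3$ are linearly independent, and put $M=M_1\cap M_3$ and $\omega=\Res_{M_1}(\bar\rho_{13})\in H^1(M_1,\dbZ/m)$. Pick $\sig_1\in M_3$ with $G=\langle M_1,\sig_1\rangle$ (possible by $m$-independence, so that $\chi_1(\sig_1)=1$ and $\chi_3(\sig_1)=0$). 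By Corollary~\ref{cor to key prop} we obtain an element $\omega'\in\Cor_{M_1}H^1(M,\dbZ/m)$ with $(\sig_1-1)\omega'=\Res_{M_1}(\bar\rho_{23})=\Res_{M_1}(\chi_2)$. The key point is that $\omega'$, unlike $\omega$, simultaneously lies in the image of corestriction and satisfies the Hilbert-90-type relation; we will modify $\bar\rho$ so that its $(1,3)$-entry restricts to $\omega'$.

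Next I would carry out the modification in two stages. First, by Proposition~\ref{rho mu a homomorphism} applied to $G$, $\chi_1$, $\chi_2$ and $\omega'$ (condition (d) there being exactly $(\sig_1-1)\omega'=\Res_{M_1}(\chi_2)$), there is a continuous homomorphism $G\to\dbU_2(\dbZ/m)$ of the shape $\bigl[\begin{smallmatrix}1&\chi_1&\bar\rho'_{13}\\0&1&\chi_2\\0&0&1\end{smallmatrix}\bigr]$ with $\Res_{M_1}(\bar\rho'_{13})=\omega'$; here $\bar\rho'_{13}=\bar\rho_{13}+\mu$ for a suitable continuous $\mu\colon G\to\dbZ/m$ accounting for the difference. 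I then replace the first row of $\bar\rho$ accordingly, obtaining a new continuous homomorphism (the compatibility of rows $1$--$3$ is guaranteed by Proposition~\ref{rho mu a homomorphism}, and rows $2$--$4$ are untouched) whose restriction to $M_1$ has $(1,3)$-entry $\omega'$. Renaming this new homomorphism $\bar\rho$, we now have $\omega=\Res_{M_1}(\bar\rho_{13})=\omega'\in\Cor_{M_1}H^1(M,\dbZ/m)$.

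With the new $\bar\rho$ in hand, the hypothesis (c) of Proposition~\ref{equivalence Cor and modified rho} is satisfied — noting that (KF1) holds for every open normal subgroup of $G$ since $(G,A,\{\kappa_M\}_M)$ is an $m$-Kummer formation. Hence condition (b) of that proposition holds: there is a continuous homomorphism $\lam\colon G\to\dbZ/m$ with $\bigl(\bar\rho(I-\lam E_{24})\bigr)^*\alp=0$. But $\bar\rho(I-\lam E_{24})$ is again a continuous homomorphism $G\to\bar\dbU_3(\dbZ/m)$ (Lemma~\ref{modified homomorphism}), and modifying the $(2,4)$-entry does not change the $(i,i+1)$-entries, so $\bigl(\bar\rho(I-\lam E_{24})\bigr)_{i,i+1}=\chi_i$ for $i=1,2,3$. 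This is the desired homomorphism with vanishing pullback, proving the theorem.

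The main obstacle is the bookkeeping in the first modification step: one must check that altering the top row of $\bar\rho$ by a homomorphism $\mu$ produces a genuine homomorphism into $\bar\dbU_3(\dbZ/m)$ and not merely into $\bar\dbU_2$ on a submatrix. This is where Proposition~\ref{rho mu a homomorphism} does the work — its equivalence of (a)/(b)/(c)/(d) precisely certifies that the relation $(\sig_1-1)\omega'=\Res_{M_1}(\chi_2)$ is both necessary and sufficient for the existence of the modified top row — but one should state carefully that the $(1,4)$-entry of the $4\times4$ matrix is suppressed in $\bar\dbU_3$, so only the rows-and-columns-$1,2,3$ block needs to be a $\dbU_2$-homomorphism, which is exactly what Proposition~\ref{rho mu a homomorphism} provides. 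Everything else is a direct invocation of the results already established.
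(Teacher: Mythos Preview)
Your proposal is correct and follows essentially the same route as the paper: obtain $\omega'$ via Corollary~\ref{cor to key prop}, use Proposition~\ref{rho mu a homomorphism} to adjust the $(1,3)$-entry so that its restriction to $M_1$ equals $\omega'$, and then invoke Proposition~\ref{equivalence Cor and modified rho} to find the $(2,4)$-modification $\lam$ with vanishing pullback. The only cosmetic difference is that the paper records the first modification as $\bar\rho'=\bar\rho(I-\lam_{13}E_{13})$ and appeals to Lemma~\ref{modified homomorphism} to certify that $\lam_{13}=\bar\rho_{13}-\bar\rho'_{13}$ is a homomorphism, whereas you argue directly that the $\bar\dbU_3$-constraints on the $(1,3)$- and $(2,4)$-entries decouple; both justifications are equivalent.
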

\begin{proof}
Let $\bar\rho\colon G\to\bar\dbU_3(\dbZ/m)$ be a continuous homomorphism.
Let $M_1,M_3$ be as above, and choose $\sig_1\in G$ such that $G=M_1\sig_1=\bigcup_{i=0}^{m-1}M_1\sig_1^i$.
Corollary \ref{cor to key prop} yields  $\omega\in H^1(M_1,\dbZ/m)$ such that
\[
\omega\in\Cor_{M_1}(H^1(M,\dbZ/m)), \qquad
(\sig_1-1)\omega=\Res_{M_1}(\chi_2).
\]

Proposition \ref{rho mu a homomorphism} yields a continuous map $\bar\rho'_{13}\colon G\to\dbZ/m$ such that
\[
\begin{bmatrix}
1&\bar\rho_{12}&\bar\rho'_{13}\\
0&1&\bar\rho_{23}\\
0&0&1
\end{bmatrix}
\colon G\to\bar\dbU_2(\dbZ/m)
\]
is a homomorphism, and $\Res_{M_1}(\bar\rho'_{13})=\omega$.
It extends to a continuous homomorphism
\[
\bar\rho'=\begin{bmatrix}
1&\bar\rho_{12}&\bar\rho'_{13}&\\
0&1&\bar\rho_{23}&\bar\rho_{24}\\
0&0&1&\bar\rho_{34}\\
0&0&0&1
\end{bmatrix}
\colon G\to\bar\dbU_3(\dbZ/m).
\]
By Lemma \ref{modified homomorphism}, $\lam_{13}=\bar\rho_{13}-\bar\rho'_{13}\colon G\to \dbZ/m$ is a homomorphism.
Note that $\bar\rho'=\bar\rho(I-\lam_{13}E_{13})$.

Proposition \ref{equivalence Cor and modified rho} yields $\lam_{24}\in H^1(G,\dbZ/m)$ such that $(\bar\rho'(I-\lam_{24}))^*\alp=0$.
Therefore
\[
\bar\rho(I-\lam_{13}E_{13})(I-\lam_{24}E_{24})
=\begin{bmatrix}
1&\bar\rho_{12}&\bar\rho_{13}-\lam_{13}&\\
0&1&\bar\rho_{23}&\bar\rho_{24}-\lam_{24}\\
0&0&1&\bar\rho_{34}\\
0&0&0&1
\end{bmatrix}
\colon G\to\bar\dbU_3(\dbZ/m)
\]
is a homomorphism as required.
\end{proof}

\begin{bibdiv}
\begin{biblist}

\bib{Albert39}{book}{
   author={Albert, A. Adrian},
   title={Structure of Algebras},
   series={American Mathematical Society Colloquium   Publications, Vol. XXIV},
   publisher={American Mathematical Society, Providence, R.I.},
   date={1939},
}

\bib{CheboluEfratMinac12}{article}{
   author={Chebolu, Sunil K.},
   author={Efrat, Ido},
   author={Min{\'a}{\v{c}}, J{\'a}n},
   title={Quotients of absolute Galois groups which determine the entire Galois cohomology},
   journal={Math. Ann.},
   volume={352},
   date={2012},
   pages={205--221},
}

\bib{DeligneGriffithsMorganSullivan75}{article}{
   author={Deligne, Pierre},
   author={Griffiths, Phillip},
   author={Morgan, John},
   author={Sullivan, Dennis},
   title={Real homotopy theory of K\"ahler manifolds},
   journal={Invent. Math.},
   volume={29},
   date={1975},
   pages={245--274},
  }

\bib{Deninger95}{article}{
author={Deninger, Christopher},
title={Higher order operations in Deligne cohomology},
journal={Invent.\ Math.},
volume={120},
date={1995},
pages={289\ndash315},
}

\bib{Draxl83}{book}{
author={Draxl, P.K.},
title={Skew Fields},
series={London Math. Soc.\ Lect.\ Notes Series},
volume={81},
publisher={Cambridge University Press},
place={Cambridge},
date={1983},
}

\bib{Dwyer75}{article}{
   author={Dwyer, William G.},
   title={Homology, Massey products and maps between groups},
   journal={J. Pure Appl. Algebra},
   volume={6},
   date={1975},
   pages={177--190},
}

\bib{Efrat14}{article}{
   author={Efrat, Ido},
   title={The Zassenhaus filtration, Massey products, and representations of profinite groups},
   journal={Adv. Math.},
   volume={263},
   date={2014},
   pages={389\ndash411},
}

\bib{Efrat17}{article}{
   author={Efrat, Ido},
   title={The Cohomology of canonical quotients of free groups and Lyndon words},
   journal={Documenta Math.},
   volume={22},
   date={2017},
   pages={973\ndash997},
}

\bib{Efrat20a}{article}{
author={Efrat, Ido},
title={The lower p-central series of a free profinite group and the shuffle algebra},
journal={J.\ Pure Appl.\ Algebra},
volume={224},
date={2020},
}

\bib{Efrat20b}{article}{
author={Efrat, Ido},
title={The $p$-Zassenhaus filtration of a free profinite group and shuffle relations},
date={2020},
eprint={arXiv:2003.08903},
}

\bib{EfratMatzri15}{article}{
label={EfMa15},
author={Efrat, Ido},
author={Matzri, Eliyahu},
title={Vanishing of Massey products and Brauer groups},
journal={Canad.\ Math.\ Bull.},
volume={58},
date={2015},
pages={730\ndash740},
}

\bib{EfratMatzri17}{article}{
author={Efrat, Ido},
author={Matzri, Eliyahu},
title={Triple Massey products and absolute Galois groups},
journal={J. Eur. Math Soc.\ (JEMS)},
volume={19},
date={2017},
pages={3629\ndash3640},
label={EfMa17},
}

\bib{EfratMinac11}{article}{
label={EfMi11},
   author={Efrat, Ido},
   author={Min\'a\v c, J\'an},
   title={On the descending central sequence of absolute Galois groups},
   journal={Amer. J. Math.},
   volume={133},
   date={2011},
   pages={1503\ndash1532},
 }

\bib{EfratMinac17}{article}{
label={EfMi17},
author={Efrat, Ido},
author={Min\'a\v c, J\'an},
title={Galois groups and cohomological functors},
journal={Trans.\ Amer.\ Math.\ Soc.},
volume={369},
date={2017},
pages={2697\ndash2720},
}

\bib{Fenn83}{book}{
author={Fenn, Roger A.},
title={Techniques of Geometric Topology},
Series={London Math.\ Soc.\ Lect. Notes Series},
volume={57},
publisher={Cambridge Univ. Press},
date={1983},
place={Cambridge}
}

\bib{GuillotMinac17}{article}{
author={Guillot, Pierre},
author={Min\'a\v c, J\'an},
title={Extensions of unipotent groups, Massey products and Galois theory},
journal={Adv.\ Math.},
volume={354},
date={2019},
}

\bib{GuillotMinacTopazWittenberg18}{article}{
author={Guillot, Pierre},
author={Min\'a\v c, J\'an},
author={Topaz,  Adam},
author={Wittenberg, Olivier},
title={Four-fold Massey products in Galois cohomology},
journal={Compos.\ Math.},
volume={154},
date={2018},
pages={1921\ndash1959},
}

\bib{HarpazWittenberg19}{article}{
label={HaW19},
author={Harpaz, Yonatan},
author={Wittenberg, Olivier},
title={The Massey vanishing conjecture for number fields},
date={2019},
status={preprint},
eprint={arXiv:1904.06512},
}

\bib{Hillman12}{book}{
   author={Hillman, Jonathan},
   title={Algebraic Invariants of Links},
   series={Series on Knots and Everything},
   volume={52},
   edition={2},
   publisher={World Scientific Publishing Co. Pte. Ltd., Hackensack, NJ},
   date={2012},
   pages={xiv+353},
}

\bib{Hoechsmann68}{article}{
author={Hoechsmann, Klaus},
title={Zum Einbettungsproblem},
journal={J.\ reine angew.\ Math.},
volume={229},
date={1968},
pages={81\ndash106},
}

\bib{HopkinsWickelgren15}{article}{
author={Hopkins, Michael},
author={Wickelgren, Kirsten},
title={Splitting varieties for triple Massey products},
journal={J. Pure Appl. Algebra},
volume={219},
date={2015},
pages={1304\ndash1319},
}

\bib{Huybrechts05}{book}{
   author={Huybrechts, Daniel},
   title={Complex Geometry},
   series={Universitext},
   publisher={Springer}
   place={Berlin},
   date={2005},
   pages={xii+309},
}

\bib{Kraines66}{article}{
author={Kraines, David},
title={Massey higher products},
journal={Trans.\  Amer.\  Math.\ Soc.},
volume={124},
date={1966},
pages={431\ndash449},
}

\bib{LamLiuSharifiWangWake20}{article}{
author={Lam, Yeuk Hay Joshua},
author={Liu, Yuan},
author={Sharifi, Romyar},
author={Wang, Jiuya},
author={Wake, Preston},
title={Generalized Bockstein maps and Massey products},
status={preprint},
eprint={arXiv:2004.11510},
date={2020},
}

\bib{LodayVallette12}{book}{
   author={Loday, Jean-Louis},
   author={Vallette, Bruno},
   title={Algebraic Operads},
   publisher={Springer, Heidelberg},
   date={2012},
   pages={xxiv+634},
}

\bib{Matzri14}{article}{
author={Matzri, Eliyahu},
title={Triple Massey products in Galois cohomology},
eprint={arXiv:1411.4146},
date={2014},
}

\bib{May69}{article}{
author={May, J.P.},
title={Matric Massey products},
journal={J.\ Algebra},
volume={12},
date={1969},
pages={533\ndash568},
}

\bib{MinacTan15a}{article}{
author={Min\'a\v c, J\'an},
author={T\^an, Nguyen Duy},
title={The Kernel Unipotent Conjecture and the vanishing of Massey products for odd rigid fields {\rm (with an appendix by Efrat, I., Min\'a\v c, J.\  and T\^an, N. D.)}},
journal={Adv.\ Math.},
volume={273},
date={2015},
pages={242\ndash270},
}

\bib{MinacTan15b}{article}{
author={Min\'a\v c, J\'an},
author={T\^an, Nguyen Duy},
title={Triple Massey products over global fields},
journal={Documenta  Math.},
volume={20},
date={2015},
pages={1467\ndash1480},
}

\bib{MinacTan16}{article}{
   author={Min{\'a}{\v{c}}, J{\'a}n},
   author={T{\^a}n, Nguy{\^e}n Duy},
   title={Triple Massey products vanish over all fields},
   journal={J. London Math. Soc.},
   volume={94},
   date={2016},
   pages={909\ndash932}
}

\bib{MinacTan17}{article}{
author={Min\'a\v c, J\'an},
author={T\^an, Nguyen Duy},
title={Triple Massey products and Galois theory},
journal={J. Eur. Math. Soc. (JEMS)},
volume={19},
date={2017},
pages={255\ndash284},
}

\bib{Morishita04}{article}{
author={Morishita, M.},
title={Milnor invariants and Massey products for prime numbers},
journal={Compos.\ Math.},
volume={140},
date={2004},
pages={69\ndash83},
}

\bib{NeukirchSchmidtWingberg}{book}{
  author={Neukirch, J{\"u}rgen},
  author={Schmidt, Alexander},
  author={Wingberg, Kay},
  title={Cohomology of Number Fields, Second edition},
  publisher={Springer},
  place={Berlin},
  date={2008},
}

\bib{PalSchlank16}{article}{
author={P\'al, Ambrus},
author={Schlank, Tomer},
title={The Brauer--Manin obstruction to the local--global principle for the embedding problem},
status={preprint},
date={2016},
eprint={arXiv:1602.04998},
}

\end{biblist}
\end{bibdiv}

\end{document}